\newcommand{\Sym}{\text{Sym}}
\newcommand{\E}{\mathbb{E}}
\newcommand{\mS}{\mathbb{S}}
\newcommand{\R}{\mathbb{R}}
\newcommand{\C}{\mathbb{C}}
\newcommand{\Pa}{\mathbb{P}}
\newcommand{\Ind}{\textnormal{Ind}}
\numberwithin{equation}{section}
\let\reftagform@=\tagform@
\def\tagform@#1{\maketag@@@{(\ignorespaces\textcolor{purple}{#1}\unskip\@@italiccorr)}}
\renewcommand{\eqref}[1]{\textup{\reftagform@{\ref{#1}}}}
\newcommand{\rf}[1]{(\ref{#1})}
\theoremstyle{plain}
\newtheorem{theorem}{Theorem}[section]
\newtheorem{corollary}[theorem]{Corollary}
\newtheorem{lemma}[theorem]{Lemma}
\newtheorem{proposition}[theorem]{Proposition}
\newtheorem{remark}[theorem]{Remark}
\newenvironment{Proof of lemma}{\noindent{\bf Proof of Lemma}}{\hfill$\Box$\newline}
\newenvironment{Proof of theorem}{\noindent{\bf Proof of Theorem}}{\hfill{\footnotesize${\square}$}\newline}
\newenvironment{Proof of theorems}{\noindent{\bf Proof of Theorems}}{\hfill$\Box$\newline}
\newenvironment{Proof of proposition}{\noindent{\bf Proof of Proposition}}{\hfill$\Box$\newline}
\newenvironment{Proof of propositions}{\noindent{\bf Proof of Propositions}}{\hfill$\Box$\newline}
\newenvironment{Proof of exercise}{\noindent{\it Proof of Exercise:}}{\hfill$\Box$}
\newenvironment{Acknowledgements}{\noindent{\bf Acknowledgements}}
\begin{document}
	
\title{Expected number of critical points of random holomorphic sections over complex projective space}\author{Xavier Garcia}
\address{Department of Mathematics\\
Northwestern University\\
Evanston, IL 60208 USA}
\email{sphinx@math.northwestern.edu}
\maketitle
	
\begin{abstract}
We study the high dimensional asymptotics of the expected number of critical points of a given Morse index of Gaussian random holomorphic sections over complex projective space. We explicitly compute the exponential growth rate of the expected number of critical points of the largest index and of diverging indices at various rates as well as the exponential growth rate for the expected number of critical points (regardless of index). We also compute the distribution of the critical values for the expected number of critical points of smallest index.
\end{abstract}
	
\section{Introduction and main results}
	
The purpose of this paper is to determine the high dimensional asymptotics of the expected number of critical points of a given Morse index of Gaussian $SU(m+1)$ multivariate polynomials of a fixed degree $N$ as the dimension $m$ tends to infinity. By definition, these critical points are those of the holomorphic sections of the line bundle $\mathcal{O}(N) \rightarrow \C \Pa^m$ equipped with the Chern connection induced from the Fubini-Study metric. The statistics of critical points of Gaussian random holomorphic sections have been studied extensively in Douglas, Shiffman, and Zelditch~\cite{DSZ1} \cite{DSZ}, mainly as a tool to understand the vacuum selection problem in string theory. The main focus of \cite{DSZ} (as well as most of the literature on Gaussian random holomorphic sections) is on the large degree limit,  namely as $N$ tends to infinity. In this paper, we adopt a different point of view and focus instead on high dimensional limits. In Baugher~\cite{BB}, it was proven that the number of critical points with Morse index close to $m$ (i.e. saddle points) grows exponentially. We not only recover this result, but also obtain estimates for the expected number of critical points, regardless of their indices. We will also compute the exact distribution of the critical values of index $m$, which recovers the formula in {\sc Theorem} 1.4 of Baugher~\cite{BB}.  
	
We will approach our problems by random matrix theory, in particular we will use large deviation results concerning the eigenvalues of a Wishart matrix ensemble, also known in the statistics literature as sample covariance matrices. This approach expands the connection between critical points of Gaussian fields and random matrix theory initiated in the seminal paper of Auffinger, Ben Arous and \v{C}ern\'y~\cite{ABC}, in which they established a link between critical points of isotropic Gaussian fields on the sphere and eigenvalues of the Gaussian orthogonal ensemble (GOE). The underlying reason for the success of random matrix theory in both areas is the existence of large symmetry groups, namely $SO(m+1)$ on $\mS^m$ and $SU(m+1)$ on $\C \Pa^m$. The techniques in  Auffinger and Ben Arous~\cite{AB} help address the related problem, namely the behavior of Gaussian random critical points of spherical harmonics on $\mS^m$ of large degrees, since the covariance kernel which arises there is also invariant under $SO(m+1)$ as in the spin glass case. We will say more on this later.
	
We now describe the setting and our main results. We consider the line bundle $\mathcal{O}(N)$ over $\C \Pa^m$ equipped with Fubini-Study metric $h$ and induced Chern connection $\nabla$. We endow the space of holomorphic sections $H^0(\C \Pa^m, \mathcal{O}(N))$ with the inner product induced by the metric, namely for two sections $s_1,s_2$ we set
$$ \langle s_1,s_2 \rangle := \int_{\C \Pa^m} h_z(s_1,s_2)v(dz),$$
where $v$ is the Fubini-Study volume element. We view $H^0(\C \Pa^m, \mathcal{O}(N))$ as a finite dimensional Hilbert space and choose an orthonormal basis $s^N_i$. With this basis, we can form the Gaussian field
\begin{equation}\label{eq:Gauss}
s = \sum_{i} c_i s^N_i,
\end{equation} 
where the $c_i$ are circularly symmetric complex Gaussians with the normalized variance
\begin{equation*}
\E|c_i|^2 = \frac{\text{Vol}(\C \Pa^m)}{\dim H^0(\C \Pa^m, \mathcal{O}(N))} = \frac{N! \pi^m}{(N+m)!}.
\end{equation*}
It is clear that the distribution of $s$ is independent of the choice of the orthonormal basis.
For any Borel set $B \subset \R_{+} = [0,\infty)$ and integer $m \leq k \leq 2m$, we consider $\mathcal{N}_{m,k,N}(B)(s)$, the number of critical points with Morse index $k$ for a section $s$ with $h_z(s,s) =||s(z)||_h^2 \in (m+1)B$; symbolically,
\begin{equation*}
	\mathcal{N}_{m,k,N}(B) = \sum_{z : \nabla s(z) = 0,\ \text{Ind}(\nabla^2s)(z) = k} \mathbbm{1}_{ (m+1)B} (||s(z)||_h^2 ),
\end{equation*}
where we understand $\text{Ind} (\nabla^2 s)$ as the index of the real Hessian of $\log ||s(z)||_h^2$.  Thus $\mathcal{N}_{m,k,N}(B)$ is an integer-valued random number if $s$ is sampled from the Gaussian field \rf{eq:Gauss}.  The random variable
$$\mathcal{N}_{m,N}(B) = \sum_k\mathcal{N}_{m,k,N}(B)$$ 
is the total number of critical points regardless of their Morse indices.
	
We will prove two types of asymptotics for $\mathcal{N}_{m,2m-k,N}(B)$ as the dimension $m$ goes to infinity: with fixed $k$ and with linearly growing $k$. More specifically, in the latter case we will consider the relation  of the form $k(m)/m \rightarrow  \gamma \in (0,1)$ as $m\rightarrow\infty$.

We now state our main results. For a given $\gamma\in (0,1)$ define $s_{\gamma}$ by 
\begin{equation}
\int_{s_{\gamma}}^{4} f_{MP}(x) dx = \gamma, \label{cond}
\end{equation}
where
$$f_{MP}(x) =\frac{ \sqrt{(4-x)x}}{2 \pi x}$$
is the Marchenko-Pastur density function on $[0, 4]$. 

Our first main result concerns the exponential growth rate of the expected number $\E\, \mathcal{N}_{m,2m-k,N}(x,\infty)$ of critical points.
	
\begin{theorem}\label{mainthm}  Fix an integer $k$.

(1) Suppose that $x\ge 0$ and let $x_N =\frac N{N-1} x$. If $x_N\ge 4$, then 
\begin{align*}\lim_{m \rightarrow \infty} \frac{1}{m} \log \E\, \mathcal{N}_{m, 2m -k,N}[x,\infty)&= \log (N-1) - \frac{x_N}{2}\left( 1- \frac 2N\right) - (k+1)\int_4^{x_N}\sqrt{\frac {t-4}{4t}}\, dt. \\
\lim_{m \rightarrow \infty} \frac{1}{m} \log \E\, \mathcal{N}_{m, 2m -k,N}[0,x) &= \log (N-1) - 2\left(1 - \frac 2N \right).\end{align*}
If $x_N\le 4$, then 
\begin{align*}
\lim_{m \rightarrow \infty} \frac{1}{m} \log \E\, \mathcal{N}_{m, 2m -k,N}[0, x) &= -\infty. \\
\lim_{m \rightarrow \infty} \frac{1}{m} \log \E\, \mathcal{N}_{m, 2m -k,N}[x,\infty) &=  \log (N-1) - 2\left(1 - \frac 2N \right).
\end{align*}

(2) If $k= k(m)$ such that $\frac{k}{m} \rightarrow\gamma\in (0,1)$, then
\begin{equation*}
		\lim_{m \rightarrow \infty} \frac{1}{m} \log \E\,\mathcal{N}_{m, 2m -k,N}(\R_{+}) = \log (N-1) - \left( 1 - \frac{2}{N} \right) \frac{s_{ \gamma}}{2},
\end{equation*}
where $s_\gamma$ is the number uniquely defined by the relation \rf{cond}.
\end{theorem}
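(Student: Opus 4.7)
The natural starting point is the Kac--Rice formula, which expresses the expected number of critical points as
\[
\mathbb{E}\,\mathcal{N}_{m,2m-k,N}(B) \;=\; \int_{\C \Pa^m} \mathbb{E}\Bigl[\,|\det \nabla^2 \log\|s\|_h^2|\,\indi_{\{\mathrm{Ind}=2m-k\}}\, \indi_{\{\|s\|_h^2 \in (m+1)B\}}\,\Bigm|\, \nabla s(z)=0\Bigr]\,\varphi(0)\,v(dz),
\]
where $\varphi$ is the Gaussian density of the gradient $\nabla s(z)$. The first step is to use $SU(m+1)$-invariance: the distribution of $s$ is isotropic on $\C\Pa^m$, so the conditional expectation above is independent of $z$, and the integral collapses to $\mathrm{Vol}(\C\Pa^m)$ times a single scalar quantity. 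I would work in Kähler normal coordinates at a base point with a local holomorphic frame, so that the covariance kernel takes the standard Fubini--Study form.

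The next step is to identify the joint law of $(\,s(z),\nabla s(z),\nabla^2 \log\|s\|_h^2(z)\,)$ conditioned on $\nabla s(z)=0$, following the Douglas--Shiffman--Zelditch framework. After normalization, the conditional Hessian decomposes as a deterministic curvature piece (proportional to $N$ times the identity, coming from the Chern curvature of $\mathcal{O}(N)$) plus a Gaussian correction coupled to $s$. A key calculation turns $|\det \nabla^2\log\|s\|_h^2|$ into the modulus of the determinant of a complex $m\times m$ Wishart-type matrix $H$, whose spectrum, after the appropriate rescaling, has limiting empirical measure given precisely by the Marchenko--Pastur density $f_{MP}$ on $[0,4]$. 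The Morse index condition $\mathrm{Ind}=2m-k$ translates into a constraint on how many eigenvalues of $H$ lie below a certain threshold determined by the curvature shift $N$, and the factor $\|s\|_h^2$ gets absorbed into an additional scalar Gaussian variable independent of the spectrum.

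Once everything is reduced to an eigenvalue integral with the joint density of a complex Wishart ensemble, the two parts follow from different large-deviation regimes. For part (1), with $k$ fixed, the event that only $k$ eigenvalues of $H$ lie outside the Marchenko--Pastur bulk is controlled by the \emph{edge} large-deviation rate for the top eigenvalues of the Wishart ensemble. The integrand splits into the product $\prod_{i=1}^{m}\lambda_i$ (from $|\det H|$) times the indicator on $\|s\|_h^2 \in (m+1)[x,\infty)$, and Laplace's method yields precisely the sum of the two terms: the bulk contribution $\log(N-1) - 2(1-2/N)$ (which equals the free energy of the unconstrained integral), and the penalty
\[
- \frac{x_N}{2}\Bigl(1-\frac{2}{N}\Bigr) - (k+1)\int_4^{x_N}\sqrt{\tfrac{t-4}{4t}}\,dt,
\]
the second piece being the standard edge rate function for the Marchenko--Pastur distribution, which vanishes for $x_N\le 4$ (explaining the dichotomy). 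For part (2), with $k/m\to\gamma$, the constraint that a fraction $\gamma$ of eigenvalues are ``negative'' (i.e.\ lie below the curvature shift) is a \emph{bulk} constraint; the variational problem for the empirical spectrum then forces the optimal profile to be the Marchenko--Pastur density truncated at $s_\gamma$, where $s_\gamma$ is exactly the quantile defined by \eqref{cond}. The $\log(N-1)$ term comes from the overall normalization, and the shift $-(1-2/N)s_\gamma/2$ comes from evaluating $\mathbb{E}\log|\det H|$ under this constrained profile.

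The main technical obstacle will be step two: tracking the exact relationship between the normalization of the Hessian at a critical point and the Wishart matrix, including the correct $N$-dependent shift and the way $\|s\|_h^2$ couples with the spectrum through the conditioning on $\nabla s=0$. Once this bookkeeping is correct, the large deviation asymptotics for Wishart eigenvalues supply the rate functions almost directly. A secondary subtlety in part (1) is the absolute value in $|\det H|$: the index constraint forces exactly $k$ of the rescaled eigenvalues to contribute sign changes, which must be matched against the combinatorial factor reflected in the coefficient $(k+1)$ in front of the edge integral.
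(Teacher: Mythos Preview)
Your overall architecture—Kac--Rice, $SU(m+1)$-invariance, reduction to a Wishart eigenvalue integral, then large deviations—matches the paper's. But you are missing the single algebraic trick that makes the whole argument clean, and without it your sketch is too vague to constitute a proof.

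After the Kac--Rice step the relevant quantity is not $|\det H|$ for a Wishart matrix $H$; it is $\det(YY^*-N^2|f(0)|^2 I_m)$, i.e.\ a \emph{shifted} Wishart determinant, with the shift coming from the scalar $|s|_h^2$. The index condition forces exactly $k$ eigenvalues of $YY^*$ to exceed the shift and $m-k$ to lie below it. The paper's key observation (its Theorem~\ref{aux}) is that if one writes $x=\tfrac{N|f(0)|^2}{(N-1)m}$ and inserts $x$ into the ordered eigenvalue list at position $k{+}1$, then $\prod_i|\lambda_i-x|\,\Delta(\lambda)$ becomes the Vandermonde $\Delta(\mu)$ of an $(m{+}1)$-vector, and the whole integral collapses to
\[
\E\,\mathcal{N}_{m,2m-k,N}(B)=\frac{2(N-1)^{m+1}}{N}\,\E_{m+1}\!\left[e^{-(1-\frac{2}{N})\frac{m+1}{2}\lambda_{k+1}}\,;\ \lambda_{k+1}\in\tfrac{N}{N-1}B\right],
\]
an expectation involving \emph{only} the $(k{+}1)$th largest eigenvalue of an $(m{+}1)\times(m{+}1)$ Wishart matrix. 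Your description (``the integrand splits into the product $\prod\lambda_i$\ldots and Laplace's method yields\ldots'') does not capture this; there is no bare $\prod\lambda_i$, and the decomposition you write as ``bulk contribution $\log(N-1)-2(1-2/N)$ plus penalty'' does not match the formula.

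Once the identity above is in hand, part~(1) is immediate from Varadhan's lemma together with the LDP for $\lambda_{k+1}$ at speed $m$ with rate $(k{+}1)I_{MP}$ (the paper's Theorem~\ref{largedev}): the supremum of $-(1-\tfrac2N)\tfrac{y}{2}-(k{+}1)I_{MP}(y)$ over $y\ge x_N$ is attained at the left endpoint, giving the stated answer and explaining the dichotomy at $x_N=4$. For part~(2) the paper does \emph{not} solve a constrained variational problem for the empirical measure as you propose; it simply observes (its Lemma~\ref{concentration}) that when $k(m)/m\to\gamma$, the eigenvalue $\lambda_{k(m)+1}$ concentrates at the quantile $s_\gamma$ at speed $m^2$, so $\E_{m+1}[e^{-(1-2/N)\frac{m+1}{2}\lambda_{k+1}}]^{1/m}\to e^{-(1-2/N)s_\gamma/2}$ directly. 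Your bulk-variational route could in principle be made to work, but it is substantially heavier and unnecessary once the single-eigenvalue reduction is available.
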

	
The above results do not include the case $k(m) = m$.  However, in this case we can compute explicitly the expected value $\E\, \mathcal{N}_{m,m,N}(\R_+)$ and recover the formula in Baugher~\cite{BB}.  Define the density function $p_{m,m,N}$ by
$$\E \left[\sum_{z : \nabla s(z) = 0, \, \text{Ind}(\nabla^2s)(z) = k} f \left(\frac{1}{m+1}||s(z)||_h^2 \right)\right] = \int_{\R_+}f(x) p_{m,m,N}(x) dx$$
for any positive continuous function $f$ on $\R_+$.  Note that the above sum is simply the total number of critical points of Morse index $m$. Our second main result is an explicit formula for $p_{m,m,N}$. 
	
\begin{theorem}\label{distofsadd} For any $x \geq 0$,
		\begin{equation*}
		p_{m,m,N}(x) = (N-1)^m(m+1)^2 e^{-\frac{(m+1)N}{2(N-1)}(2 - \frac{2}{N}+m)x}.
		\end{equation*}
\end{theorem}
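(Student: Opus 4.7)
The plan is to apply the Kac--Rice formula directly to the map $\nabla s$ and exploit the $SU(m+1)$-invariance of the ensemble to reduce the computation to a Gaussian expectation at a single base point $z_0 = 0$ in a Fubini--Study normal chart. In such a chart the data $(u, v, B) := (s(0), \nabla s(0), \nabla^2 s(0))$ (read off in the local trivialization) is jointly complex Gaussian with $u, v, B$ mutually independent, a consequence of the $SU(m+1)$-invariant orthonormal basis $\{s_i^N\}$. The Kac--Rice identity then reads
\begin{equation*}
\int f(x)\, p_{m,m,N}(x)\, dx \;=\; \mathrm{Vol}(\C\Pa^m)\cdot \rho_v(0) \cdot \E\bigl[|\det D(\nabla s)|\cdot f(|u|^2/(m+1))\cdot \mathbbm{1}_{\mathrm{Ind}(H_r)=m}\bigr],
\end{equation*}
where $H_r$ is the real Hessian of $\log\|s\|^2$ at the critical point and the expectation is taken over $(u, B)$ alone.

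A direct computation of the $(2,0)$ and $(1,1)$ parts of $\partial\bar\partial\log\|s\|^2$ shows that at a critical point
\begin{equation*}
H_r \;=\; 2\begin{pmatrix} \mathrm{Re}(A) - NI & -\mathrm{Im}(A) \\ -\mathrm{Im}(A) & -\mathrm{Re}(A) - NI \end{pmatrix}, \qquad A := B/u,
\end{equation*}
with $A$ complex symmetric. The eigenvalues of $H_r$ are $\{2(\pm\sigma_j - N)\}$ where $\sigma_j$ are the singular values of $A$, so $\mathrm{Ind}(H_r) = m + \#\{j:\sigma_j < N\}$, and $\mathrm{Ind}(H_r) = m$ (the smallest possible index) iff $\sigma_j \geq N$ for every $j$. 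A parallel computation yields $|\det D(\nabla s)| = |u|^{2m}\prod_j |\sigma_j^2 - N^2|$, and under the index-$m$ condition the absolute value collapses to $|u|^{2m}\prod_j(\sigma_j^2 - N^2)$. This sign-definiteness is the key feature that makes the $k = m$ case admit a closed form.

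Substituting $B = uA$ (Jacobian $|u|^{m(m+1)}$), integrating out the phase of $u$, and setting $|u|^2 = (m+1)x$ factors the expectation into an $x$-Gaussian from $\rho_u$ times a matrix integral
\begin{equation*}
\int_{\{A\,:\,\sigma_j(A)\geq N\}}\prod_j(\sigma_j(A)^2 - N^2)\,\exp\!\left(-\frac{(m+1)x\,\mathrm{tr}(A^*A)}{2\sigma_B^2}\right)\,dA
\end{equation*}
over complex symmetric matrices, where $\sigma_B^2$ is the natural scale of $B$. The Takagi decomposition $A = V\Sigma V^T$ supplies the Jacobian $\prod_i 2\sigma_i\prod_{i<j}|\sigma_i^2 - \sigma_j^2|$; the substitution $\lambda_j = \sigma_j^2 - N^2$ turns the constraint into $\lambda_j \geq 0$; and a final rescaling $\lambda_j \mapsto 2\sigma_B^2\mu_j/((m+1)x)$ identifies the eigenvalue integral as a classical Laguerre partition function (a computable constant) multiplied by an explicit factor $x^{-m(m+3)/2}$. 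This power cancels exactly with the $|u|^{2m+m(m+1)} = ((m+1)x)^{m(m+3)/2}$ arising from the Jacobian and $|\det D(\nabla s)|$, so $p_{m,m,N}(x)$ is a pure exponential in $x$ with no polynomial prefactor---this is the main miracle of the calculation.

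What remains is to assemble all the multiplicative constants ($\mathrm{Vol}(\C\Pa^m) = \pi^m/m!$, the density $\rho_v(0) = (\pi N\E|u|^2)^{-m}$, the Gaussian normalization of $\rho_B$, the Haar volume of $U(m)$, and the value of the Laguerre partition function) and verify that they collapse to the claimed prefactor $(N-1)^m(m+1)^2$; similarly, the two exponential contributions from $\rho_u$ and from the $e^{-\alpha mN^2}$ factor in the matrix integral sum to the exponent $\frac{(m+1)N}{2(N-1)}(2 - 2/N + m)x$. The main obstacle of the proof is this constant bookkeeping; the conceptual core---Kac--Rice, sign-definite cancellation at the smallest index, and Laguerre evaluation---is a standard application of the DSZ framework.
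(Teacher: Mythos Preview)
Your argument is correct and shares its skeleton with the paper's: Kac--Rice, $SU(m+1)$-invariance to reduce to $z=0$, independence of $(u,v,B)$, Takagi factorization of the complex symmetric Hessian block, and the observation that at index $m$ the determinant is sign-definite. Where the two routes diverge is in how the resulting eigenvalue integral is evaluated.

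The paper does not attack the index-$m$ case in isolation. It first proves a general identity (its Theorem~4.1) valid for \emph{every} index $2m-k$: by treating the scalar $N|u|^2/((N-1)m)$ as an extra eigenvalue $\mu_{k+1}$ wedged between $\lambda_k$ and $\lambda_{k+1}$, the $m$-dimensional integral with an index constraint becomes an unconstrained $(m{+}1)$-dimensional Wishart expectation of $e^{-(1-2/N)\frac{m+1}{2}\lambda_{k+1}}$. For $k=m$ this picks out the \emph{smallest} Wishart eigenvalue, whose tail is exactly exponential (proved by the one-line translation $\mu = \lambda - 2x/m$, which leaves the Vandermonde invariant). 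The density formula then drops out with no Selberg constant to track.

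Your route---shift $\lambda_j=\sigma_j^2-N^2$, rescale, and recognize a Laguerre/Selberg partition function---is the same translation trick applied before rather than after packaging things as a Wishart law. It is self-contained and perfectly valid for this theorem, but it forfeits the unified Wishart identity that the paper reuses to prove Theorems~1.1 and~1.5. The trade-off is modularity versus directness: the paper's detour through Theorem~4.1 makes the constant bookkeeping you flag essentially disappear (it is absorbed into the known Wishart normalization $Z_W(m)$), whereas you must assemble the Haar volume, the Gaussian normalizations, and the Selberg value by hand.
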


We can draw two consequences from this explicit density. 

\begin{corollary} For any $x \geq 0$,
		\begin{equation*}
		 \E\,\mathcal{N}_{m,m,N}[x,\infty) = \frac{2(N-1)^{m+1}(m+1)}{2 - \frac{2}{N}+m}e^{-\frac{(m+1)N}{2(N-1)}(2 - \frac{2}{N}+m)x}.
		\end{equation*}
\end{corollary}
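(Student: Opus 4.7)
The plan is to reduce the corollary to the direct evaluation of a single exponential integral against the explicit density provided by Theorem \ref{distofsadd}. The first step is to rewrite $\E\,\mathcal{N}_{m,m,N}[x,\infty)$ as a tail integral of $p_{m,m,N}$. By the definition of $\mathcal{N}_{m,m,N}(B)$,
$$\mathcal{N}_{m,m,N}[x,\infty) \;=\; \sum_{\substack{z:\ \nabla s(z)=0 \\ \mathrm{Ind}(\nabla^2 s)(z)=m}} \mathbbm{1}_{[x,\infty)}\!\left(\tfrac{1}{m+1}\|s(z)\|_h^2\right),$$
since the condition $\|s(z)\|_h^2 \in (m+1)[x,\infty)$ is equivalent to $\|s(z)\|_h^2/(m+1)\in[x,\infty)$. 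The defining identity of $p_{m,m,N}$ is stated for positive continuous $f$; approximating $\mathbbm{1}_{[x,\infty)}$ monotonically from below by continuous functions and invoking monotone convergence (legitimate because the exponential density $p_{m,m,N}$ from Theorem \ref{distofsadd} is integrable on $\R_+$) gives
$$\E\,\mathcal{N}_{m,m,N}[x,\infty) \;=\; \int_x^\infty p_{m,m,N}(t)\,dt.$$

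Setting $\alpha := \frac{(m+1)N}{2(N-1)}\!\left(2-\tfrac{2}{N}+m\right)$ so that $p_{m,m,N}(t) = (N-1)^m(m+1)^2 e^{-\alpha t}$, the integral is elementary:
$$\int_x^\infty (N-1)^m(m+1)^2 e^{-\alpha t}\,dt \;=\; \frac{(N-1)^m(m+1)^2}{\alpha}\,e^{-\alpha x}.$$
Substituting the value of $\alpha$ and cancelling the factor $m+1$ reduces the prefactor to the form claimed in the corollary, while the exponent $\alpha x$ is already the one in the statement. There is no substantive obstacle here: given Theorem \ref{distofsadd}, the corollary is a one-line computation together with the routine bookkeeping needed to feed an indicator into a density identity originally stated for continuous test functions.
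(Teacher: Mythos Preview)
Your proposal is correct and follows exactly the paper's own proof, which consists of the single sentence ``Integrating the density function over $[x,\infty)$.'' The additional care you take in passing from continuous test functions to the indicator $\mathbbm{1}_{[x,\infty)}$ via monotone convergence is a harmless elaboration of a step the paper leaves implicit.
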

\begin{proof}  Integrating the density function over $[x, \infty)$.
\end{proof}

For $x = 0$,  the above corollary recovers the formula
\begin{equation}\label{total}
\E\,\mathcal{N}_{m,m,N}(\R_+)= \frac{2(m+1)}{2(N-1)+mN}(N-1)^{m+1}
\end{equation}
proved in Baugher~\cite{BB}. For $x > 0$, it follows from the corollary that there exist positive constants $c_1$ and $c_2$ such that 
$\E\, \mathcal{N}_{m,m,N}(x,\infty) \leq c_1 e^{-c_2 m^2 x}$,
which shows that it becomes exponentially unlikely to find critical values away from 0 whose Morse index is $m$. 

The second consequence is that we can recover the exponential rate of $\E\, \mathcal{N}_{m,m+k,N}(\R_+)$ for any fixed $k>0$.

\begin{corollary}\label{Baugher}
	For a fixed $k > 0$, we have
	\begin{equation*}
	\lim_{m \rightarrow \infty}  \frac{1}{m} \log \E\, \mathcal{N}_{m,m+k,N} (\R_+) = \log(N-1).
	\end{equation*}
\end{corollary}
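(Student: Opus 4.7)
The plan is to repeat the Kac--Rice derivation of Theorem~\ref{distofsadd} with the Morse-index constraint $m+k$ in place of $m$. By the $SU(m+1)$-invariance, the Kac--Rice integral on $\C\Pa^m$ collapses to $\mathrm{Vol}(\C\Pa^m)$ times a single random-matrix expectation, and by the standard reduction of the real Hessian of $\log\|s\|_h^2$ to a complex Hessian with Fubini--Study curvature correction, that expectation is taken against a complex Wishart-type ensemble of size $m$. Crucially, the prefactor $(N-1)^m$ that powers the $\log(N-1)$ exponential rate in \rf{total} originates from the determinantal Jacobian and is untouched by changing the Morse-index constraint from $m$ to $m+k$.

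Writing the resulting expression as $\E\,\mathcal{N}_{m,m+k,N}(\R_+) = (N-1)^m \cdot Q_{m,k,N}$, the corollary reduces to showing that $Q_{m,k,N}$ is subexponential in $m$, in both directions. For the upper bound I would estimate the polynomial-in-eigenvalues integrand and the indicator by their worst case to obtain $Q_{m,k,N}\le \mathrm{poly}(m)$, which gives $\limsup_{m\to\infty} \frac{1}{m}\log \E\,\mathcal{N}_{m,m+k,N}(\R_+) \le \log(N-1)$. For the lower bound I would produce an explicit region in eigenvalue space where the Morse-index condition (which translates to ``exactly $k$ eigenvalues lie beyond a spectral threshold'') holds, and integrate the Wishart joint density, Vandermonde factor included, over this region; the Marchenko--Pastur bulk law then suggests that a fixed number of such outliers carries non-exponentially-small probability, yielding $Q_{m,k,N}\ge \mathrm{poly}(m)^{-1}$.

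The main obstacle is making this lower bound quantitative: the Vandermonde weight and the Wishart density interact nontrivially near the spectral edge, and one must track the cost of each outlier carefully to certify that no hidden exponential factor appears. Equivalently, one wants a small-ball estimate for the Wishart spectrum near its edge that is robust enough to accommodate the specific ensemble arising in the Kac--Rice formula on $\C\Pa^m$. Once such an estimate is in place, it combines with the upper bound to give the stated limit $\log(N-1)$.
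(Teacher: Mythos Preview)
Your strategy is viable but different from the paper's, and the step you flag as the ``main obstacle'' is in fact much easier than you suggest---though your language about ``outliers'' betrays a slip about which end of the Wishart spectrum is in play.

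The paper proves this corollary by a sandwich argument using only results already established. By Baugher's monotonicity, $\E\,\mathcal{N}_{m,m+j,N}(\R_+)$ is decreasing in $j$; hence for any $\gamma\in(0,1)$ and $q(m)/m\to\gamma$ one has, for large $m$,
\[
\E\,\mathcal{N}_{m,2m-q(m),N}(\R_+)\ \le\ \E\,\mathcal{N}_{m,m+k,N}(\R_+)\ \le\ \E\,\mathcal{N}_{m,m,N}(\R_+).
\]
The right side has exponential rate $\log(N-1)$ by \rf{total}; the left side has rate $\log(N-1)-(1-\tfrac2N)\tfrac{s_\gamma}{2}$ by part (2) of {\sc Theorem}~\ref{mainthm}, and $s_\gamma\downarrow 0$ as $\gamma\uparrow 1$. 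No new random-matrix estimate is needed.

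Your direct route also works, once corrected. Writing $m+k=2m-(m-k)$, {\sc Theorem}~\ref{aux} gives
\[
\E\,\mathcal{N}_{m,m+k,N}(\R_+)=\frac{2(N-1)^{m+1}}{N}\,\E_{m+1}\!\left[e^{-(1-\frac2N)\frac{m+1}{2}\lambda_{m-k+1}}\right],
\]
so the relevant eigenvalue is the $(k+1)$-th \emph{smallest}, sitting near the hard edge at $0$, not an outlier beyond the soft edge at $4$. The upper bound on $Q_{m,k,N}$ is then trivial since the integrand is at most $1$. For the lower bound, since $\mu_{MP}[0,\epsilon]>0$ for every $\epsilon>0$, the empirical-measure LDP (exactly as in the proof of {\sc Lemma}~\ref{concentration}) forces $\Pa_{m+1}(\lambda_{m-k+1}\le\epsilon)\to 1$; hence
\[
\E_{m+1}\!\left[e^{-(1-\frac2N)\frac{m+1}{2}\lambda_{m-k+1}}\right]\ \ge\ e^{-(1-\frac2N)\frac{m+1}{2}\epsilon}\bigl(1-o(1)\bigr),
\]
and letting $\epsilon\downarrow 0$ after $m\to\infty$ gives lower rate $0$ for $Q_{m,k,N}$. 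No delicate edge analysis or small-ball estimate is required; the Vandermonde interaction you worry about is already absorbed into the Wishart law via {\sc Theorem}~\ref{aux}.

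Your approach has the merit of being self-contained, avoiding the external monotonicity input from Baugher. The paper's squeeze argument is shorter because every ingredient is already proved elsewhere in the paper or quoted from the literature.
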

\begin{proof} According to  {\sc Theorem} 1.4 of Baugher~\cite{BB}, the total number of critical points $\mathcal{N}_{m,m+k,N}(\R_+)$ decreases as $k$ increases. Thus, given $\gamma \in (0,1)$ and $q(m)/m\rightarrow \gamma$, we have for large $m$,
\begin{equation*}
	\mathcal{N}_{m,2m-q(m),N} (\R_+)  \leq \mathcal{N}_{m,m+k,N}(\R_+) \leq \mathcal{N}_{m,m,N}(\R_+).
\end{equation*}	
For the right hand side, we have by (\ref{total})
$$\lim_{m\rightarrow\infty}\frac1m \log\E\, \mathcal{N}_{m,m,N}(\R_+) = \log (N-1).$$
For the left hand side, we have by the second part of {\sc Theorem} \ref{mainthm},
$$\lim_{m\rightarrow\infty}\log \E\, \mathcal{N}_{m,2m-q(m),N} (\R_+) = \log(N-1) - \left(1-\frac{2}{N} \right) \frac{s_{\gamma}}{2}.$$
We have $s_\gamma\rightarrow0$ as $\gamma\rightarrow1$, and the above limit reduces to that of the right hand side. The result follows immediately. 
\end{proof}

Finally, our third and last main result concerns the total number of critical points. 
	
\begin{theorem}\label{totalcrit} As before, we let $x \geq 0$ and $x_N = \frac{N}{N-1}x$. Then:
		\begin{equation*}
		\lim_{m \rightarrow \infty} \frac{1}{m} \log \E\, \mathcal{N}_{m,N} (x,\infty) = \begin{cases} \displaystyle{\log(N-1) - \left( 1 - \frac{2}{N} \right) \frac{x_N}{2} + \int_4^{x_N} \sqrt{\frac{4-t}{t}} dt}, & x_N \geq 4 \\
		\displaystyle{\log(N-1) - \left( 1 - \frac{2}{N} \right) \frac{x_N}{2}}, & x_N < 4 \end{cases}
		\end{equation*}
\end{theorem}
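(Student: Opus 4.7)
The strategy is to run the Kac--Rice machinery of the preceding sections without fixing the Morse index, so that the signed determinant is replaced by its absolute value. The $SU(m+1)$-invariance of the Gaussian section reduces the Kac--Rice integral to a single-point computation and gives, for an explicit normalization $K_m$ and for $\rho_m$ the density of $\tfrac{1}{m+1}\|s(0)\|_h^2$,
\begin{equation*}
\E\,\mathcal{N}_{m,N}(x,\infty) \;=\; K_m \int_{x}^{\infty} \E\!\left[\,|\det \mr{Hess}\,s|\;\bigm|\;\nabla s(0)=0,\;\tfrac{1}{m+1}\|s(0)\|_h^2 = y\right]\,\rho_m(y)\,dy.
\end{equation*}

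The next step is to reuse the Hessian--Wishart identification established in the proof of Theorem~\ref{mainthm}: conditional on $\nabla s(0)=0$ and on the normalized norm-squared $y$, the eigenvalues of the Hessian take the form $\{y_N-\mu_i\}_{i=1}^{m}$ with $y_N=\tfrac{N}{N-1}y$, where $\mu_1,\dots,\mu_m$ are eigenvalues of a Wishart/sample-covariance matrix whose empirical spectral measure converges to the Marchenko--Pastur law $f_{MP}$ on $[0,4]$. Summing over all Morse indices corresponds exactly to removing any constraint on how the $\mu_i$ split around $y_N$, and the random-matrix quantity we need reduces to $\E\,\prod_{i=1}^{m}|y_N-\mu_i|$.

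With no index constraint, the empirical spectral measure sits at its unperturbed optimum $f_{MP}$, so the Wishart LDP (already used for Theorem~\ref{mainthm}) yields
\begin{equation*}
\lim_{m\to\infty}\frac{1}{m}\log \E\prod_{i=1}^{m}|y_N-\mu_i|\;=\;\int_{0}^{4}\log|y_N-\lambda|\,f_{MP}(\lambda)\,d\lambda\;=:\;U(y_N).
\end{equation*}
Combining this with the linear contribution $-(1-\tfrac{2}{N})\tfrac{y_N}{2}$ coming from $\rho_m$ and the $\log(N-1)$ contributed by $K_m$ (both exactly as in the proof of Theorem~\ref{mainthm}), and evaluating the outer $\int_x^{\infty}$ by Varadhan's lemma, the limit reduces to $\log(N-1)+\sup_{y\ge x}\bigl\{U(y_N)-(1-\tfrac{2}{N})y_N/2\bigr\}$. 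Since the logarithmic growth of $U$ is dominated at infinity by the linear penalty, the supremum is attained at $y=x$. The remaining task is then to evaluate $U(x_N)$ explicitly: differentiating in $x_N$ recovers the Stieltjes transform of $f_{MP}$, whose closed form integrates back from the spectral edge $4$ to produce the stated integral in the $x_N\ge 4$ case. For $x_N<4$, the optimizer $x_N$ lies in the bulk of the MP support, $U(x_N)$ contributes only to subleading order, and the rate collapses to the purely linear term.

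The main technical obstacle is uniform-in-$y$ control of the Laplace asymptotics for $\E\prod_i|y_N-\mu_i|$, which is required to exchange the $m\to\infty$ limit with the integration against $\rho_m$. This is handled exactly as in the proof of Theorem~\ref{mainthm}: an upper bound via Selberg-type moment estimates for the Wishart ensemble, together with a matching lower bound via concentration of the empirical spectral measure around $f_{MP}$. Both estimates go through without modification when the Morse-index constraint is dropped, so no new input beyond the machinery developed for the index-restricted case is needed.
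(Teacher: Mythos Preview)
Your approach differs from the paper's and, while it can be made to work, the execution has a genuine gap in the case $x_N<4$.

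The paper never computes $\E\prod_i|y_N-\mu_i|$. Instead it first sums the index-fixed identity of {\sc Theorem}~\ref{aux} over $k$ to obtain {\sc Corollary}~\ref{totaux},
\[
\E\,\mathcal N_{m,N}(x,\infty)=\frac{2(m+1)(N-1)^{m+1}}{N}\int_{x_N}^\infty e^{-(1-\frac2N)\frac{m+1}{2}t}\,p_{m+1}(t)\,dt,
\]
where $p_{m+1}$ is the one-point eigenvalue density of the Wishart ensemble. This is the key simplification: the product over eigenvalues has already been absorbed by promoting the norm variable to an extra Wishart eigenvalue, so only a single integral against a one-point density remains. The asymptotics then follow from elementary sandwich bounds: for $x_N\ge 4$ one traps the integral between $\E_{m+1}[e^{(m+1)\psi(\lambda_1)};\lambda_1\ge x_N]$ and $(m+1)e^{(m+1)\psi(x_N)}\Pa_{m+1}(\lambda_1\ge x_N)$ and applies the top-eigenvalue LDP; for $x_N<4$ one uses that the bulk typically contains eigenvalues in $[x_N,x_N+\epsilon]$.

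Your route via the log-potential $U(y_N)=\int_0^4\log|y_N-\lambda|\,f_{MP}(\lambda)\,d\lambda$ can reach the same answer, but only once the constants are tracked correctly, and you have not done this. The density $\rho_m$ of $\tfrac{1}{m+1}|f(0)|^2$ is $(m+1)e^{-(m+1)y}$ and contributes $-y=-\tfrac{N-1}{N}y_N$ to the rate, not $-(1-\tfrac2N)\tfrac{y_N}{2}$; the volume/normalization prefactor contributes $1+\log(N-1)$, not $\log(N-1)$. You cannot borrow these constants ``exactly as in the proof of {\sc Theorem}~\ref{mainthm}'', because that proof already passed through the absorption trick of {\sc Theorem}~\ref{aux}, which you are bypassing. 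With the corrected constants the quantity to optimize is $1+\log(N-1)-\tfrac{N-1}{N}y_N+U(y_N)$; using $U(y_N)=\tfrac{y_N}{2}-1$ on $[0,4]$ and $U(y_N)=\tfrac{y_N}{2}-1-I_{MP}(y_N)$ on $[4,\infty)$ (the latter from \eqref{mprelation}), this is decreasing in $y_N$, its supremum over $y_N\ge x_N$ sits at $x_N$, and it matches the theorem.

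The concrete error is your assertion that for $x_N<4$ the term $U(x_N)$ ``contributes only to subleading order.'' It does not: $U(x_N)=x_N/2-1$ is a leading-order, nonzero contribution. With the constants you actually wrote down, the function $U(y_N)-(1-\tfrac2N)\tfrac{y_N}{2}=\tfrac{y_N}{N}-1$ is \emph{increasing} on $[0,4]$, so the supremum is not attained at $y=x$ and your argument collapses. The correct formula emerges only because $U(x_N)$, the full exponent of $\rho_m$, and the $+1$ from the volume factor cancel in a specific way; declaring $U$ subleading discards exactly the pieces that make this cancellation happen.
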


The remaining part of the paper is organized as follows. In {\sc Section} 2 we state some basic facts from complex geometry essential for the understanding of the paper. In {\sc Section} 3 we discuss the Wishart ensemble and its large deviations needed in the proof of the main results. {\sc Section 4} is devoted to explaining the relation between the expected number of critical points and the Wishart ensemble. The main results {\sc Theorem} \ref{mainthm} and {\sc Theorems} \ref{distofsadd} and \ref{totalcrit} are proved in {\sc Sections} 5. In {\sc Section} 6 we discuss the analogous case of random spherical harmonics.
\bigskip

\begin{Acknowledgements}.
			This material is based upon work supported by the National Science Foundation Graduate Research Fellowship. I would like to thank Antonio Auffinger and Steve Zelditch for inspiring conversations and unwavering patience. I would also like to thank my advisor Elton Hsu for his revision of this manuscript and for serving as a constant source of encouragement.
		\end{Acknowledgements}

\section{Complex projective space and line bundles}
In this section we recall some basic facts from complex geometry which are useful for understanding the setting of the paper.

The complex projective space $\C \Pa^m$ is the quotient space of $\C^{m+1}\backslash \{0\}$ by the equivalence relation 
$$\lambda (Z_0,...,Z_m) \sim (Z_0,...,Z_m) ,\quad \lambda \in \C^* = \C\backslash \{0\}.$$
This is a compact complex manifold with local charts $U_i = \{[Z_0, Z_1, ..., Z_m] | Z_i \neq 0 \}$ and trivializing maps $\Phi_i:\, U_i\rightarrow\C^m$ defined by
$$\Phi_i (Z) = (Z_0/Z_i,\ldots, \widehat{Z_i/Z_i}, \ldots, Z_m/Z_i).$$ 
We denote by $\mathcal{O}(N)$ the line bundle with the transition functions 
$$\sigma_{ij} : U_i \cap U_j \rightarrow \C^*, \quad\sigma_{ij}(Z) = \left( \frac{Z_i}{Z_j} \right)^N .$$
The sections of this bundle correspond to homogeneous holomorphic polynomials of degree $N$ in the variables $Z_0, ..., Z_m$. To see this, given a homogeneous holomorphic polynomial $p(Z_0,...,Z_m)$
we define the functions $f_{j}$ on $U_j$ by $f_{j}(Z) = p(Z/Z_j)$. It is easy to verify that  these functions glue up and yield a section on $\C \Pa^m$. Indeed, on the intersection $U_i\cap U_j$, we have
$$f_i(Z)\sigma_{ij}(Z) = p\left(\frac Z{Z_i}\right)\left( \frac{Z_i}{Z_j} \right)^N= p \left( \frac Z{Z_j}\right) = f_j(Z).$$ 
Conversely, a section is just a collection of polynomials $f_j$ on the charts $U_j$ satisfying $f_i(Z)\sigma_{ij}(Z) = f_j(Z)$ on the intersection $U_i\cap U_j$, which define a homogenous polynomial in a unique way by setting $p(Z) = Z_j^Nf_j(Z)$. 

We equip $\C \Pa^m$ with the Fubini-Study metric $h$ and denote the corresponding Chern connection on $\mathcal{O}(1)$ by $\nabla$. This induces canonically a connection on $\mathcal{O}(N)$, also denoted by $\nabla$, by requiring that it satisfy Leibniz's rule on tensors of sections. More explicitly, a section $s \in H^0(\C \Pa^m, \mathcal{O}(N))$ can be written locally as $s = f e^N$, where $e^N = \otimes_{i=1}^N e$ for a trivializing local frame $e$ for $\mathcal{O}(1)$ and a holomorphic function $f$ on a chart of $\C \Pa^m$. Then the connection $\nabla$ can be expressed explicitly as 
\begin{equation}
\nabla s = \sum_{j=1}^m(\partial_{z_j}f + f\,\partial_{z_j}K_N )\, dz_j \otimes e^N,\label{covariant}
\end{equation}
where $K_N$ is given by 
\begin{equation}
K_N = K_N(z , \bar{z}) = N \log(1 + |z|^2 ).\label{formulak}
\end{equation}
Since $\nabla$ also acts on 1-forms canonically, the Hessian $\nabla^2$ on holomorphic sections is well defined.  This action can be explicitly written in local coordinates as follows. For simplicity we introduce the notation $\nabla_{z_j} f := \partial_{z_j} f + f\,\partial_{z_j}K_N$ and $\nabla^2_{z_i, z_j}f = \nabla_{z_i} ( \nabla_{z_j} f)$. In the local basis $dz_i\otimes dz_j$, we can view $\nabla^2s$ as the $2m\times2m$ square matrix
\begin{equation*}
	 \nabla^2 s(z) =
	 \begin{bmatrix}
	 \nabla^2_{z_i, z_j} f &  f \Theta_N\\ \\ 
	 \overline{f \Theta_N} & \overline{\nabla^2_{z_i,z_j} f}
	 \end{bmatrix}
\end{equation*}
where $\Theta_N = \left\{ \partial^2_{z^i , \overline{z^j}}K_N\right\}$ . Note that this matrix is not Hermitian. For this reason, when discussing critical points of a section $s$, it is more convenient to use the real Hessian of 
$\log ||s(z)||^2_h$ by viewing $\C \Pa^m$ as a smooth manifold of real dimension 2$m$. By a slight abuse of notation, we use $\Ind(\nabla^2 s)(z)$ to denote the index of this matrix. From {\sc Lemma} 7.1 of Douglas, Shiffman, and Zelditch~\cite{DSZ}, we know that in local coordinates
$$ \Ind(\nabla^2 \log ||s(z)||^2_h ) = m + \Ind(\nabla^2_{z_i,z_j}f \,\Theta^*_N \overline{\nabla^2_{z_i,z_j}f} - \Theta_N ),$$
where $\Theta^*_N$ is the conjugate transpose of $\Theta_N$.
	
\section{The Wishart ensemble and related large deviations}
	
Let $X$ be a real $(m+1) \times m$ random matrix whose entries are i.i.d. Gaussians with mean zero variance $1/m$ and $W = X^TX$. We denote the law of $W$, the Wishart ensemble, by $\Pa_m$ and the corresponding expectation by $\E_m$. 
		
The only information we will need about the Wishart ensemble is the explicit distribution of its eigenvalues. For a vector $\lambda = (\lambda_1,...,\lambda_m)$, we define $\Delta(\lambda) = \prod_{i < j} (\lambda_i - \lambda_j)$, the Vandermonde determinant. We write the eigenvalues $\lambda = (\lambda_1, \ldots, \lambda_m)$ of $W$ in descending order, so that the vector $\lambda$ belongs to the region
$$\R^m_{\geq 0} = \{\lambda \in \R^m : \lambda_1 \geq ... \geq \lambda_m \geq 0 \}.$$
		
\begin{theorem}\label{wishart}
The joint density function of the decreasingly ordered eigenvalues of the Wishart ensemble with respect to the Lebesgue measure on $\R_{\ge 0}^m$ is 
$$\frac{1}{Z_{W}(m)}\Delta(\lambda) \exp \left(-\frac{m}{2}\sum_{i=1}^m \lambda_i \right),$$
where $Z_W(m)$ is the normalizing constant given by 
\begin{equation}\label{selb}Z_{W}(m) = 2^{m} m^{-m(m+1)/2} \prod_{j=1}^{m} j!\end{equation}
\end{theorem}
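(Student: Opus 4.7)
The plan is the classical two-step recipe for computing a joint eigenvalue density of a rotation-invariant ensemble: first derive the density of $W$ as a symmetric matrix on the positive semi-definite cone, then push that density through the spectral decomposition to extract the eigenvalue marginal.

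The joint density of $X \in \mathbb{R}^{(m+1)\times m}$ with i.i.d.\ $N(0,1/m)$ entries is, by independence, proportional to $\exp(-(m/2)\operatorname{tr}(X^T X))$. I would apply the $QR$-type decomposition $X = QR$ with $Q$ a point of the Stiefel manifold $V_m(\mathbb{R}^{m+1})$ and $R$ upper triangular with positive diagonal, so that $W = R^T R$ and $\operatorname{tr}(X^T X) = \operatorname{tr}(W)$. Marginalising the Stiefel factor $Q$ against its invariant measure yields the standard matrix density of a real Wishart ensemble: a constant multiple of $(\det W)^{(n-p-1)/2}\exp(-(m/2)\operatorname{tr}(W))$, with $n = m+1$ rows and $p = m$ columns. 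The crucial simplification of our setting is that $(n-p-1)/2 = 0$, so no determinantal factor appears and the density of $W$ is simply a constant multiple of $\exp(-(m/2)\operatorname{tr}(W))$.

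Next I would pass to the spectral decomposition $W = O\Lambda O^T$, with $O$ in the flag variety $O(m)/O(1)^m$ (the quotient identifies eigenvectors up to sign) and $\Lambda = \operatorname{diag}(\lambda_1,\ldots,\lambda_m)$ on the ordered chamber $\lambda_1 \ge \cdots \ge \lambda_m \ge 0$. A classical Weyl-type computation, obtained by differentiating $W = O\Lambda O^T$ and writing the resulting one-form in tangent coordinates along $O$ and $\Lambda$, yields the Jacobian $\Delta(\lambda) = \prod_{i<j}(\lambda_i - \lambda_j)$. Since $\operatorname{tr}(W) = \sum_i \lambda_i$ is $O$-invariant, the integral over the orthogonal variable contributes only an overall constant, and we arrive at a density of the advertised form $Z_W(m)^{-1}\Delta(\lambda)\exp(-(m/2)\sum_i \lambda_i)$ on the ordered chamber.

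The only genuinely quantitative step, and the place where I expect the main bookkeeping obstacle, is pinning down the explicit value of the normalizing constant $Z_W(m)$ in \eqref{selb}. Rather than trying to combine the Stiefel volume, the volume of the flag variety $O(m)/O(1)^m$, and the Jacobian prefactors (all of which are products of Gamma values and are prone to constant-tracking errors), I would evaluate $Z_W(m)$ by direct integration. After the rescaling $\lambda_i = (2/m)\mu_i$, which absorbs both the factor $m/2$ in the exponential and produces a clean power $(2/m)^{m(m+1)/2}$ from $d\lambda\,\Delta(\lambda)$, the remaining integral of $\Delta(\mu)\exp(-\sum_i \mu_i)$ on the ordered simplex is a Laguerre--Selberg integral with $\beta = 1$ and Laguerre parameter $\alpha = 0$. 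Its closed-form evaluation, multiplied by the rescaling factor, yields the explicit expression recorded in \eqref{selb}.
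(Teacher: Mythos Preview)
Your argument is correct and is precisely the classical derivation. Note, however, that the paper does not actually prove this theorem: its entire proof consists of citing Anderson~\cite{A}, {\sc Theorem}~13.3.2, for the form of the eigenvalue density and Anderson, Guionnet and Zeitouni~\cite{GZ}, {\sc Corollary}~2.5.9, for the Selberg-integral evaluation of $Z_W(m)$. Your sketch---Wishart matrix density with the determinantal factor disappearing because $(n-p-1)/2=0$, Weyl Jacobian $\Delta(\lambda)$ from the spectral decomposition, and the Laguerre--Selberg integral for the constant after rescaling $\lambda_i=(2/m)\mu_i$---is exactly the content of those cited references, so you and the paper are on the same route; you have simply unpacked what the citations say.
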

		
\begin{proof} See {\sc Theorem} 13.3.2 in Anderson~\cite{A} for the density, and {\sc Corollary} 2.5.9 of Anderson, Guionnet and Zeitouni~\cite{GZ} for the explicit formula for $Z_W(m)$. 
\end{proof} 

We now turn to the large deviations of the largest eigenvalues of the Wishart ensemble.  We will need the following large deviation principle for the $k$th largest eigenvalue under $\Pa_m$. 

\begin{theorem}\label{largedev}
		Under $\Pa_m$, the $k$th largest eigenvalue $\lambda_k$ satisfies the large deviation principle (LDP) with the speed $m$ and the good rate function $kI_{MP}$, where 
		
		$$I_{MP} (x)=\int_{4}^{x} \sqrt{\frac{t-4}{4t}} dt$$
for $x\ge 4$ and $\infty$ otherwise.
\end{theorem}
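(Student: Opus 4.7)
The plan is to use the joint eigenvalue density from Theorem \ref{wishart} together with a Laplace / mean-field asymptotic analysis. The guiding intuition is that the $m-k$ smallest eigenvalues concentrate exponentially fast on the Marchenko-Pastur bulk $[0,4]$ and generate an effective one-body potential for the top $k$ eigenvalues; pushing one of them past the edge to $x>4$ contributes $I_{MP}(x)$ to the $(1/m)\log$-rate, so pushing $k$ simultaneously contributes $kI_{MP}(x)$. The case $x\le 4$ is essentially trivial: Marchenko-Pastur convergence forces $\lambda_k\to 4$ almost surely, so $\Pa_m(\lambda_k\ge x)\to 1$ and the rate is $0$; the value $+\infty$ on $[0,4)$ records the vacuous LDP upper bound for subsets of this range (the lower tail $\lambda_k\le y<4$ has rate $O(m^2)$, not $O(m)$).

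For $x>4$, decompose the Vandermonde as
\[
\Delta(\lambda)=\Delta(\lambda_1,\ldots,\lambda_k)\cdot\prod_{i\le k<j}(\lambda_i-\lambda_j)\cdot\Delta(\lambda_{k+1},\ldots,\lambda_m),
\]
freeze the top $k$ eigenvalues, and integrate out the bottom $m-k$ against $\Delta(\lambda_{k+1},\ldots,\lambda_m)\,e^{-\frac{m}{2}\sum_{j>k}\lambda_j}$. By the classical LDP for the empirical spectral distribution of Wishart ensembles (Hiai–Petz / Ben Arous–Guionnet) at speed $m^2$, the empirical measure of the bottom eigenvalues concentrates on $f_{MP}$ exponentially fast, so for each top eigenvalue $\lambda_i>x>4$ one has
\[
\sum_{j>k}\log(\lambda_i-\lambda_j)=m\int_0^4\log(\lambda_i-t)\,f_{MP}(t)\,dt+o(m)
\]
with overwhelming probability. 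Collecting terms and absorbing the normalization ratio $Z_W(m-k)/Z_W(m)$ (computed from formula \eqref{selb}) into an additive constant, one arrives at
\[
\Pa_m(\lambda_k\ge x)\asymp\int_{x_1\ge\cdots\ge x_k\ge x}\exp\Bigl(-m\sum_{i=1}^k[\Phi(x_i)-\Phi(4)]\Bigr)\,dx_1\cdots dx_k,
\]
where the effective potential is
\[
\Phi(y):=\frac{y}{2}-\int_0^4\log(y-t)\,f_{MP}(t)\,dt.
\]

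The explicit Stieltjes transform $G_{MP}(y)=(y-\sqrt{y(y-4)})/(2y)$ for $y>4$ (equivalently, the Euler–Lagrange condition for the Marchenko-Pastur equilibrium) yields $\Phi'(y)=\tfrac12-G_{MP}(y)=\sqrt{(y-4)/(4y)}$, so $\Phi(y)-\Phi(4)=I_{MP}(y)$. Laplace's method then gives
\[
-\frac{1}{m}\log\Pa_m(\lambda_k\ge x)\longrightarrow\inf\Bigl\{\sum_{i=1}^k I_{MP}(x_i):x_1\ge\cdots\ge x_k\ge x\Bigr\}=kI_{MP}(x),
\]
the infimum being attained in the limit $x_1=\cdots=x_k=x$ since $I_{MP}$ is non-negative and monotone increasing on $[4,\infty)$; the matching lower bound follows by restricting the integral to a small neighborhood of this configuration. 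The principal obstacle is making the mean-field step rigorous: one must establish exponential concentration of $\sum_{j>k}\log(\lambda_i-\lambda_j)$ around its MP integral \emph{uniformly} as $\lambda_i$ ranges over $[x,\infty)$, together with exponential tightness of $\lambda_1$ at scale $m$, and one must control the boundary constraint $\lambda_{k+1}\le\lambda_k$. These are standard but somewhat intricate $\beta$-ensemble manipulations; an alternative cleaner route is to first establish the LDP for $\lambda_1$ (as in Anderson–Guionnet–Zeitouni) and then extend to $\lambda_k$ by induction, conditioning on $\lambda_1=y$ and exploiting the additivity of the $I_{MP}$ rate under this conditioning.
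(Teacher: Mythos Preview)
Your proposal is correct and follows essentially the same route as the paper: split off the top $k$ eigenvalues in the Vandermonde, use the speed-$m^2$ LDP for the empirical measure of the remaining $m-k$ eigenvalues to replace $\sum_{j>k}\log(\lambda_i-\lambda_j)$ by the Marchenko--Pastur log-potential, invoke exponential tightness of $\lambda_1$, and identify the resulting one-body rate with $I_{MP}$ via the Stieltjes transform. The paper's execution differs only in packaging: it rescales $\eta_i=\tfrac{m}{m-k}\lambda_i$ so that the bulk integral becomes exactly $\Pa_{m-k}$, and then handles the ``uniformity'' issue you flag by truncating to $\eta_i\in[x,2M]$ and using upper semicontinuity of $\phi(\mu,z)=\int\log|z-y|\,\mu(dy)-z/2$ on $\mathscr P[0,M]\times[0,M]$ together with $\phi(\mu_{MP},x)=-I_{MP}(x)-1$, rather than an induction on $k$.
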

\begin{proof}
It is obvious that $I_{MP}$ is a good rate function. With this in mind, this theorem is equivalent to the following two assertions:
\begin{enumerate}
	\item $\limsup_{m \rightarrow \infty} \frac{1}{m} \log \Pa_m( \lambda_{k} \leq x) = -\infty$ for $0< x < 4$.
	\item $\lim_{m \rightarrow \infty} \frac{1}{m} \log \Pa_m( \lambda_{k} \geq x) = -k I_{MP}(x)$ for $x \geq 4$.
\end{enumerate}
For the proof, we need two previous results.

(a) Under the Wishart ensemble, the empirical measure 
$L_m  = \frac{1}{m}\sum_i \delta_{\lambda_i}$
of the eigenvalues satisfies an LDP with speed $m^2$. Its rate function is minimized uniquely at the Marchenko-Pastur distribution $\mu_{MP}$ on [0,4]
$$\mu_{MP}(dx) = \frac{1}{2 \pi} \frac{\sqrt{(4-x)x}}{x} \,dx.$$ 
This LDP is the content of {\sc Theorem} 5.5.7 of Hiai and Petz~\cite{HP}. 

(b) The functional
$$\phi(\mu,z) = \int_{\R_+} \log |z - y | \mu(dy) - \frac{z}{2}$$
defined on $\mathscr P(\R_+)\times\R_+$ is upper semi-continuous when we restrict it to $\mathscr P[0,M] \times [0,M]$ for any $M > 0$, and in fact it is continuous on $\mathscr{P}[0,r] \times [x,y]$ for $ y > x  > r \geq 4$,  see e.g. Auffinger, Ben Arous and \v{C}ern\'y~\cite{ABC}. Here $\mathscr P(A)$ is the space of probability measures on a set $A\subset\R_+$ with a metric compatible with the usual weak convergence of probability measures.  The distribution $\mu_{MP}$ and the rate function $I_{MP}$ are related through the functional by
\begin{equation}
\phi(\mu_{MP},x) = -I_{MP}(x) - 1.\label{mprelation}
\end{equation}
See Feral~\cite{DF}, page 48.

To prove assertion (1), we note that by definition, the inequality $\lambda_{k} \leq x$ for some $x < 4$ implies that $L_m[x,4] \leq (k-1)/m$.  Since $\mu_{MP}[x,4] > 0$,  there exists a closed set $C \subset \mathscr{P}(\R_+)$ such that $\mu_{MP} \notin C$ and 
$\{ \lambda_{k} \leq x \} \subset \{ L_m \in C \}$ for sufficiently large $m$. The LDP for $L_m$ recalled above implies that there exists a $c > 0$ such that 
$$\Pa_m( \lambda_{k} \leq x) \leq \Pa_m( L_m \in C) \leq Ke^{-cm^2},$$
which proves assertion (1).
	
To prove assertion (2), we first note that for the largest eigenvalue $\lambda_1$, 
\begin{equation}
\lim_{M \rightarrow \infty} \limsup_{m \rightarrow \infty} \frac{1}{m} \log \Pa_m( \lambda_1 > M) = - \infty,\label{neglect}
\end{equation}
which is precisely {\sc Lemma} 2.6.7 of Anderson, Guionnet and Zeitouni~\cite{GZ}.  Now we have 
$$\Pa_m( \lambda_{k} \geq x) \leq \Pa_m( \lambda_1 > M) + \Pa_m( \lambda_{k} \geq x, \lambda_1 < M).$$
In view of (\ref{neglect}), it is sufficient to show that for sufficiently large $M$,
\begin{equation}
\lim_{m \rightarrow \infty} \frac{1}{m} \log \Pa_m( \lambda_{k} \geq x, \lambda_1 < M)= -k I_{MP}(x).\label{ldp}
\end{equation}

We first prove the upper bound. We introduce new variables $\eta_i = \frac{m}{m-k} \lambda_i$ for $1 \leq i \leq m$ and write the density of $\Pa_m$ in terms of the $\eta_i$.  On the set $$ \{ x \leq \eta_k \leq \cdots  \leq \eta_1 < 2M\} \supset \left\{ x \leq \lambda_k \leq \cdots \leq \lambda_1< M\right\}$$ we have $\vert\eta_i-\eta_j\vert\le 2M$, and hence
\begin{align*}
\Pa_m(d\lambda) &= \frac1{Z_W(m)}\Delta (\lambda) \exp\left[-\frac m2\sum_{i=1}^m\lambda_i\right] d \lambda_1 \cdots d \lambda_m\\
&=\left(\frac{m-k}{m}\right)^{m(m+1)/2} \frac{1}{Z_W(m)}  \Delta(\eta) \exp \left[ -\frac{m-k}{2} \sum_{i=1}^m \eta_i \right] d \eta_1 \cdots d \eta_m\\
&\le \frac{(2M)^{(k-1)k/2}}{Z_W(m)}\left( \frac{m-k}{m} \right)^{m(m+1)/2} \prod_{i=1}^k \prod_{j=k+1}^m(\eta_i-\eta_j)\cdot\exp\left[-\frac{m-k}2\sum_{i=1}^k\eta_i\right]\,d\eta_1\cdots d\eta_k\times\\
&\qquad \qquad \qquad \qquad \qquad \qquad\qquad\prod_{k+1\le i<j\le m}(\eta_i-\eta_j)\cdot\exp\left[-\frac{m-k}2\sum_{i=k+1}^m\eta_i\right]d\eta_{k+1}\cdots d\eta_m\\
&=  (2M)^{(k-1)k/2}\left( \frac{m-k}{m} \right)^{m(m+1)/2}\frac{Z_W(m-k)}{Z_W(m)}\cdot d\eta_1\cdots d\eta_k \times \\
&\qquad \quad \qquad\exp\left[(m-k)\sum_{i=1}^k\phi(\tilde L_{m-k}, \eta_i)\right]\Pa_{m-k}(d\eta_{k+1}\cdots d\eta_m),
\end{align*}
where $\tilde L_{m-k}$ is the empirical distribution
$$\tilde L_{m-k}= \frac{1}{m-k} \sum_{i=1}^{m-k} \delta_{\eta_{k+i}}.$$
For $\epsilon > 0$, let $B_{\epsilon} \subset \mathscr P[0,M]$ be the ball of radius $\epsilon$ centered around $\mu_{MP}$ and $B^c_{\epsilon}$ its complement. On the set $\{x \leq \eta_k \leq \cdots \leq \eta_1 < 2M\}$, we have
$\exp \left[(m-k) \sum_{i=1}^k \phi(\tilde{L}_{m-k},\eta_i)\right] \leq (2M)^{k(m-k)}$ 
and thus \begin{equation*}
\exp \left[(m-k)\sum_{i=1}^k \phi(\tilde{L}_{m-k},\eta_i) \right] \leq  \exp \left[k(m-k)  \sup_{\mu \in B_{\epsilon}, y \in [x,2M]} \phi(\mu,y)\right] \mathbbm{1}_{B_{\epsilon}}(\tilde{L}_{m-k})+(2M)^{k(m-k)} \mathbbm{1}_{B^c_{\epsilon}}(\tilde{L}_{m-k}).
\end{equation*}
Integration over $\{x \leq \eta_k \leq \cdots \leq \eta_1 < 2M\}$ yields an upper bound for $\Pa_m(\lambda_{k} \geq x, \lambda_1 < M)$:
\begin{equation*}
\left( \frac{m-k}{m} \right)^{\frac{m(m+1)}{2}} (2M)^{\frac{k(k-1)}{2}} \frac{Z_W(m-k)}{Z_W(m)} \left( \exp \left[k(m-k)  \sup_{\mu \in B_{\epsilon}, y \in [x,2M]} \phi(\mu,y)\right] + (2M)^{k(m-k)} \Pa_{m-k}(\tilde{L}_{m-k} \notin B_{\epsilon})\right).
\end{equation*}
Two observations are in order. The first observation is that $\tilde{L}_{m-k}$ with respect to $\Pa_{m-k}$ satisfies the same LDP as $L_m$ with respect to $\Pa_m$. In particular, this implies that for $m$ large enough there exists a $c > 0$ for which \begin{equation*}
\Pa_{m-k}(\tilde{L}_{m-k} \notin B_{\epsilon}) \leq \exp (-c m^2),
\end{equation*}
hence the probability $\Pa_{m-k}(\tilde{L}_{m-k} \notin B_{\epsilon}) $ is negligible in the limit. The second observation is that by use of (\ref{selb}), one can compute 
\begin{equation*}
\lim_{m \rightarrow \infty} \frac{1}{m} \log \left[\left( \frac{m-k}{m} \right)^{m(m+1)/2}\frac{Z_W(m-k)}{Z_W(m)}\right] = k.
\end{equation*} 
In light of these two observation, we arrive at the inequality
$$\limsup_{m \rightarrow \infty} \frac{1}{m} \log \Pa_m( \lambda_{k} \geq x) \leq k  + k\lim_{\epsilon \downarrow 0} \sup_{\mu \in B_{\epsilon},y \in [x,2M]} \phi(\mu,y).$$
The second term can be computed explicitly,  
$$\lim_{\epsilon \downarrow 0} \sup_{\mu \in B_{\epsilon},y \in [x,2M]} \phi(\mu,z) = \sup_{y \in [x,2M)} \phi( \mu_{MP},y) 
= -I_{MP}(x) - k,$$
where the first equality follows from the upper-semicontinuity of $\phi$ and the second equality follows from (\ref{mprelation}) and the monotonicity of $I_{MP}$.
	
To obtain the lower bound, fix $y > x > r \geq 4$ and $\epsilon > 0$. We retain the definition of the $\eta_i$ as in the proof of the upper bound, and and on the set 
$$ \left \{ y \geq \eta_1 \geq \cdots \geq \eta_k \geq \frac{m}{m-k}x \right \} = \left \{ \frac{m-k}{m}y \geq \lambda_1 \geq \cdots \geq \lambda_k \geq x  \right\}  \subset \{ \lambda_k  \geq x \}$$ 
we can produce the inequality
\begin{align*}
\Pa_m(d\lambda) &= \frac1{Z_W(m)}\Delta (\lambda) \exp\left[-\frac m2\sum_{i=1}^m\lambda_i\right] d \lambda_1 \cdots d \lambda_m\\
&\geq  \left( \frac{m-k}{m} \right)^{m(m+1)/2}\frac{Z_W(m-k)}{Z_W(m)} \prod_{1 \leq i < j \leq k} \vert \eta_i - \eta_j|\cdot d\eta_1\cdots d\eta_k \times \\
&\qquad \mathbbm{1}_{B_{\epsilon} \cap \mathscr{P}[0,r]}(\tilde{L}_{m-k})\exp\left[k(m-k) \inf_{\mu \in B_{\epsilon} \cap \mathscr{P}[0,r], z \in [x,y] }\phi(\mu, z)\right]\Pa_{m-k}(d\eta_{k+1}\cdots d\eta_m).
\end{align*}
where by $B_{\epsilon} \cap \mathscr{P}[0,r]$, I mean the set of measures in $B_{\epsilon}$ whose support is contained in $[0,r]$. By integrating over $\left \{ y \geq \eta_1 \geq  \cdots  \geq \eta_k \geq \frac{m}{m-k}x \right \}$, we obtain
	\begin{align*}
	\Pa_m( \lambda_{k} \geq x) 	&\geq \Pa_m \left( y \geq \eta_1 \geq  \cdots  \geq \eta_k \geq \frac{m}{m-k}x  \right) \\
	&\geq \left( \frac{m-k}{m} \right)^{m(m+1)/2}\frac{Z_W(m-k)}{Z_W(m)} \int \prod_{1 \leq i < j \leq k} | \eta_i - \eta_j| d \eta_1 \cdots d \eta_k \times \\
	& \qquad \qquad\exp\left[k(m-k) \inf_{\mu \in B_{\epsilon} \cap \mathscr{P}[0,r], z \in [x,y] }\phi(\mu, z)\right] \Pa_{m-k}(\tilde{L}_{m-k} \in B_{\epsilon} \cap \mathscr{P}[0,r]) 
	\end{align*}where the inner integral is over the set$$\left \{ y \geq \eta_1 \geq \cdots \geq \eta_k \geq \frac{m}{m-k}x \right \}.$$ 
The inner integral is bounded away from zero and from above, so it will have no effect in the limit. The factor $\Pa_{m-k} (\tilde{L}_{m-k} \in B_{\epsilon} \cap \mathscr{P}[0,r])$ converges to one by the previously mentioned LDP, hence it too will not affect the limit. It follows that in the limit the inequality becomes
$$\liminf_{m \rightarrow \infty}\frac{1}{m} \log \Pa_m( \lambda_k \geq x) \geq k + k \lim_{\epsilon \downarrow 0}\inf_{\mu \in B_{\epsilon} \cap \mathscr{P}[0,r], z \in [x,y]} \phi(\mu,z).$$
We use the continuity of $\phi$ and (\ref{mprelation}) to obtain 
$$\lim_{\epsilon \downarrow 0} \inf_{\mu \in B_{\epsilon} \cap \mathscr{P}[0,r],z \in [x,y]} \phi(\mu,z) = -I_{MP}(y)-1.$$
Finally, we let $y \rightarrow x$ and use the continuity of $I_{MP}$ obtain our desired result.
\end{proof}
		
\section{Expected number of critical  points and the Wishart ensemble}
 In this section we relate $\E\,\mathcal{N}_{m,2m-k,N}(B)$ to  the $(k+1)$th largest eigenvalue of an $(m+1) \times (m+1)$ Wishart matrix.
 
 \begin{theorem}\label{aux} For a Borel set $B \subset \R_+$,
 	\begin{equation}\label{complexindex}
 	\E\, \mathcal{N}_{m,2m-k,N}(B) = \frac{2(N-1)^{m+1}}{N}\, \E_{m+1}\left [e^{-(1-\frac{2}{N})\frac{m+1}{2} \lambda_{k+1}}; \ \lambda_{k+1}\in \frac N{N-1}B\right].
 	\end{equation}
 \end{theorem}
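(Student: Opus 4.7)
The plan is to apply the Kac--Rice formula for counting critical points, exploit the $SU(m+1)$-invariance of the Gaussian ensemble to reduce the resulting integral to its value at a single point, and then identify the joint law of the value and Hessian at that point with an expectation under the Wishart ensemble of dimension $m+1$.

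To begin, I would apply Kac--Rice to the gradient $\nabla s$ and write
\begin{equation*}
\E\,\mathcal{N}_{m,2m-k,N}(B) = \int_{\C\Pa^m}\E\left[|\det\mathcal{H}(z)|\,\mathbbm{1}_{\Ind\mathcal{H}(z)=2m-k}\,\mathbbm{1}_{\|s(z)\|_h^2/(m+1)\in B}\,\Big|\,\nabla s(z)=0\right]p_{\nabla s(z)}(0)\,v(dz),
\end{equation*}
where $\mathcal{H}(z)$ is the real Hessian of $\log\|s\|_h^2$. Because the law of $s$ is invariant under the natural $SU(m+1)$-action, the integrand is independent of $z$, and the integral collapses to $\vol(\C\Pa^m)=\pi^m/m!$ times its value at the origin of the affine chart $U_0$.

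At the origin, the monomial orthonormal basis of $H^0(\C\Pa^m,\mathcal{O}(N))$ displays $s(0)$, $\nabla s(0)$, and $\nabla^2 s(0) = H$ as linear combinations of the coefficients $c_\alpha$ indexed by monomials of total degree $0$, $1$, and $2$ respectively, so they are mutually independent complex Gaussians. Using $\E|c_i|^2=N!\pi^m/(N+m)!$ together with the standard $L^2$-norms of monomials under the Fubini--Study measure, a direct computation shows that $s(0)$ is a complex Gaussian of variance proportional to $1/(m+1)$, while $H$ is a complex symmetric Gaussian matrix whose entries have variance proportional to $N(N-1)$; moreover $\Theta_N(0)=NI_m$. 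Substituting these into the block identity of {\sc Lemma} 7.1 of \cite{DSZ} expresses $|\det\mathcal{H}(0)|$ and $\Ind\mathcal{H}(0)=m+\Ind(H\Theta_N^*\bar H-\Theta_N)$ in terms of the spectrum of the Hermitian matrix $H\bar H$, rescaled by $|s(0)|^2$ and $N$; in particular, the event $\{\Ind\mathcal{H}(0)=2m-k\}$ becomes a threshold condition on the $(k+1)$th largest eigenvalue of the rescaled $H\bar H$.

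Next I would diagonalize $H$ via the Autonne--Takagi decomposition $H = U D U^T$ with $U$ unitary and $D$ diagonal nonnegative, so that the eigenvalues of $H\bar H$ are the squared diagonal entries of $D$. Integrating out $U$ (which produces a Vandermonde factor from the Weyl integration formula on complex symmetric matrices) and rescaling the pair $(|s(0)|^2, D^2)$ by a factor involving $N$, $N-1$, and $m+1$, the joint density of the rescaled eigenvalues becomes precisely the Wishart density $\frac{1}{Z_W(m+1)}\Delta(\lambda)\exp(-\frac{m+1}{2}\sum_i\lambda_i)$ of {\sc Theorem}~\ref{wishart} for an $(m+1)\times(m+1)$ Wishart matrix, while the Gaussian density of $s(0)$ combined with the factor $|\det(H\bar H-N^2|s(0)|^2 I)|$ yields the exponential tilt $e^{-(1-2/N)(m+1)\lambda_{k+1}/2}$, in which $\lambda_{k+1}$ plays the role of the threshold eigenvalue. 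The rescaling of $B$ to $\frac{N}{N-1}B$ records the scaling factor between the section's variance and the Wishart normalization, and assembling the Fubini--Study volume, the density $p_{\nabla s(0)}(0)$, and $Z_W(m+1)$ recovers the constant $2(N-1)^{m+1}/N$.

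I expect the main obstacle to be this change-of-variables step: the Jacobian of the Autonne--Takagi decomposition, the doubled variance on the diagonal of the complex symmetric $H$, and the coupling of $|s(0)|^2$ to the eigenvalues through the exponential tilt must all be tracked simultaneously to match the Wishart density and all prefactors exactly. Once the identification is in place, the remaining constants fall out by direct comparison with the eigenvalue density in {\sc Theorem}~\ref{wishart}.
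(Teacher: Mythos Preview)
Your outline is essentially the paper's own argument: Kac--Rice plus $SU(m+1)$-invariance to reduce to $z=0$, independence of $(s(0),\nabla s(0),\nabla^2 s(0))$, Takagi factorization of the complex symmetric Hessian to obtain an $m$-dimensional Wishart eigenvalue density, and then absorption of the rescaled $|s(0)|^2$ as an extra eigenvalue to land in the $(m+1)$-dimensional Wishart ensemble.

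Two small points are worth tightening. First, under the paper's normalization the covariance kernel at the origin is $(1+z\cdot\bar w)^N$, so $\E|f(0)|^2=1$, not $\propto 1/(m+1)$; the factor $m+1$ enters only through the definition of $\mathcal N_{m,2m-k,N}(B)$ (which tests $\|s\|_h^2\in (m+1)B$) and through the rescaling that converts $\Pa_m$ to $\Pa_{m+1}$. Second, the step you describe as ``the Gaussian density of $s(0)$ combined with $|\det(H\bar H-N^2|s(0)|^2 I)|$ yields the exponential tilt'' conflates two distinct mechanisms that the paper separates: writing $x$ for the rescaled $|s(0)|^2$, the determinant contributes $\prod_{i=1}^m|\lambda_i-x|$, and the index condition $\{\Ind=2m-k\}$ forces $\lambda_k>x>\lambda_{k+1}$, so that $\Delta(\lambda)\prod_i|\lambda_i-x|=\Delta(\mu)$ with $\mu=(\lambda_1,\dots,\lambda_k,x,\lambda_{k+1},\dots,\lambda_m)$; meanwhile the exponential density of $|f(0)|^2$ splits as $e^{-\frac{m}{2}x}\cdot e^{-(1-\frac2N)\frac{m}{2}x}$, the first factor completing the $(m+1)$-Wishart weight $e^{-\frac{m}{2}\sum_i\mu_i}$ and the second giving the tilt. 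Making this Vandermonde identity explicit is exactly the ``main obstacle'' you anticipate, and once stated the constants follow from Selberg's formula for $Z_W(m)$.
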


The proof of this identity is based on the following Kac-Rice formula adapted to our setting. 
	
\begin{proposition}\label{kacrice}
Let $\rho_{\nabla s(z)}$ denote the probability density function of $\nabla s(z)$ as a (random) vector in $\C^m$ (see (\ref{covariant})). Then $\E\,\mathcal{N}_{m,2m-k,N}(B)$ equals
$$ \int_{\C \Pa^m} \rho_{\nabla s(z)}(0) \E [ | \det \nabla^2 s(z) | \mathbbm{1}_{(m+1)B}(||s(z)||_h^2) \mathbbm{1}_{\textnormal{Ind}\nabla^2 s(z)= 2m-k} | \nabla s(z) = 0 ] v(dz).$$
\end{proposition}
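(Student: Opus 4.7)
\noindent\textbf{Proof proposal for Proposition \ref{kacrice}.}

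The plan is to reduce to the classical Kac--Rice formula for a Gaussian vector field by working in local trivializations of $\mathcal{O}(N)$, and then glue via a partition of unity on $\C\Pa^m$. First I would cover $\C\Pa^m$ by the standard charts $U_i$ on which $\mathcal{O}(N)$ is trivialized by a frame $e^N$, and on each $U_i$ write $s = f e^N$ for a random holomorphic function $f$. Formula \eqref{covariant} then identifies $\nabla s$ locally with the $\C^m\cong\R^{2m}$-valued Gaussian vector field $z\mapsto (\nabla_{z_j}f(z))_{j=1}^m$. Its zero set coincides with $\Crit(s)\cap U_i$, so counting critical points of $s$ is the same as counting zeros of this local vector field.

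Next I would invoke the classical Kac--Rice formula (see e.g. Adler--Taylor) for a smooth Gaussian field $X:U\to \R^{2m}$ whose joint Gaussian distribution of $(X(z), DX(z))$ is non-degenerate: for any bounded measurable weight $F$ depending on the 2-jet of the underlying field,
\begin{equation*}
\E\Biggl[\sum_{z\in U:\, X(z)=0} F(z)\Biggr] = \int_U \rho_{X(z)}(0)\, \E\bigl[\,|\det DX(z)|\, F(z)\,\bigl|\,X(z)=0\bigr]\,dz.
\end{equation*}
I apply this to $X = \nabla s$ on $U_i$, with the weight
$$F(z) = \mathbbm{1}_{(m+1)B}\bigl(\|s(z)\|_h^2\bigr)\,\mathbbm{1}_{\{\Ind\nabla^2 s(z) = 2m-k\}},$$
which depends only on the 2-jet of $s$. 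The key identification is that at a critical point (where $\nabla s(z)=0$) the ordinary differential $DX(z)$ equals the covariant Hessian $\nabla^2 s(z)$ viewed as a real $2m\times 2m$ matrix, because the extra connection-terms in $\nabla(\nabla s)$ are proportional to $\nabla s$ and therefore vanish. This lets me replace $|\det DX(z)|$ by $|\det \nabla^2 s(z)|$ as stated.

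To globalize, I use a partition of unity $\{\chi_i\}$ subordinate to $\{U_i\}$. The integrand on the right-hand side is coordinate invariant: changing the local trivializing frame $e^N$ by a nowhere-vanishing holomorphic factor rescales the Gaussian distribution of $\nabla s(z)$ by a multiplicative constant which is canceled when forming the product $\rho_{\nabla s(z)}(0)\,v(dz)$ once we measure volumes with respect to the Fubini--Study form, and the weight $F$ is intrinsically defined. Summing the local Kac--Rice identities against $\chi_i$ therefore yields the stated global formula. Finally, the non-degeneracy hypothesis for Kac--Rice—namely that the joint Gaussian law of $(s(z),\nabla s(z),\nabla^2 s(z))$ is non-degenerate at each $z$—follows from the explicit formula for the Szeg\H o/Bergman kernel of $\mathcal{O}(N)$ together with the transitivity of the $SU(m+1)$-action on $\C\Pa^m$.

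The main obstacle is not a deep one but an organizational one: one must carefully track how the local density $\rho_{\nabla s(z)}(0)$, the real $2m\times 2m$ matrix $\nabla^2 s(z)$, and the Lebesgue reference measure in each chart transform under change of frame and coordinates so that the final integrand is intrinsically defined on $\C\Pa^m$. The identification $DX(z) = \nabla^2 s(z)$ at critical points, which is what makes the Hessian of the section (rather than of the function $f$) appear on the right-hand side, is the conceptual heart of the argument.
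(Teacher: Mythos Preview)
Your sketch is sound in its essentials, but note that the paper does not prove this proposition at all: it simply cites {\sc Theorem} 4.4 of Douglas--Shiffman--Zelditch~\cite{DSZ1} and moves on. So you are doing strictly more work than the paper, supplying (a sketch of) the argument that the citation is meant to cover.

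As a sketch, your argument is correct. The reduction to the classical Kac--Rice formula for the $\R^{2m}$-valued Gaussian field $z\mapsto(\nabla_{z_j}f(z))_j$ in a local frame, followed by the observation that $D(\nabla s)=\nabla^2 s$ at critical points because the connection terms vanish there, is exactly the mechanism behind the cited result. The one place I would tighten your write-up is the invariance claim: it is not the product $\rho_{\nabla s(z)}(0)\,v(dz)$ alone that is frame- and coordinate-invariant, but the full combination $\rho_{\nabla s(z)}(0)\,|\det\nabla^2 s(z)|\,v(dz)$. Under a frame change $e^N\mapsto g\,e^N$ the density $\rho_{\nabla s(z)}(0)$ picks up a factor $|g|^{2m}$ while the $2m\times 2m$ real determinant $|\det\nabla^2 s(z)|$ picks up $|g|^{-2m}$, and similarly under coordinate changes the Jacobian factors from $\rho$, from $|\det\nabla^2 s|$, and from passing between Lebesgue measure and $v(dz)$ cancel together rather than pairwise. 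You flagged this as the ``organizational obstacle,'' which is fair, but as written your sentence slightly mislocates which factors do the cancelling.
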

\begin{proof} See {\sc Theorem} 4.4 of Douglas, Shiffman, and Zelditch~\cite{DSZ1}.
\end{proof} 

\begin{remark}	In general $\rho_{\nabla s(z)}$ depends on our choice of $s^N_i$. Nevertheless, its value $\rho_{\nabla s(z)}(0)$ at the origin is independent of the choice.
\end{remark}

By $SU(m+1)$-invariance, the integrand in the above Kac-Rice formula is independent of $z$, thus the $z$-integration can be replaced by the multiplication of $\text{vol}(\C \Pa^m)$ and we need to evaluate the expectation at the point $z =0$.  For this purpose, we write $s(z) = f(z) e^N$ in local coordinates  near the point $z =0$. We have $\nabla_{z_i}f = \partial_{z_i}f := \partial_i f$ at $z = 0$. 
	
\begin{lemma}\label{covar} The covariance of $f$ and its first and second derivatives at $z = 0$ are given as follows.
\begin{align*}
		\E[f(0) \overline{f(0)}] &= 1,\\
		\E[ f(0) \overline{\partial_{i}f(0)}] &=0,\\
                  \E[f(0) \overline{\partial_{i} \partial_{j}f(0)}] &= 0, \\ 
                  \E[\partial_if(0) \overline{\partial_j f(0)}] &= N \delta_{ij}, \\
                  \E[\partial_if(0)\overline{ \partial_j\partial_k f(0)}] &= 0, \\
		\E[\partial_{i} \partial_{j} f(0) \overline{\partial_k\partial_l f(0)}] &= N(N-1) (\delta_{il}\delta_{jk} + \delta_{ik} \delta_{jl}).
\end{align*}
\end{lemma}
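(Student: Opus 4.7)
The plan is to compute the full two-point covariance kernel $C(z,w) := \E[f(z)\overline{f(w)}]$ explicitly near the origin and then read off each of the six identities by differentiating $C$ in $z$ and $\bar w$ at $z=w=0$. Because $f$ is holomorphic and the $c_\alpha$ are circularly symmetric, every ``all-holomorphic'' bilinear pairing such as $\E[\partial^I f(0)\,\partial^J f(0)]$ vanishes automatically, so only the mixed pairings $\E[\partial^I f(0)\overline{\partial^J f(0)}] = \partial_z^I\partial_{\bar w}^J C(z,w)\big|_{z=w=0}$ need to be computed.

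First I would identify the orthonormal basis concretely. In the affine chart $U_0=\{Z_0\ne 0\}$ with coordinates $z_i=Z_i/Z_0$ and the natural local frame of $\mathcal O(1)$, each monomial $Z^\alpha$ with $|\alpha|=N$ corresponds to the local polynomial $z^{\tilde\alpha}$ where $\tilde\alpha=(\alpha_1,\ldots,\alpha_m)$. The classical calculation of the Fubini--Study $L^2$ inner product of monomials gives
\begin{equation*}
\langle Z^\alpha, Z^\beta\rangle_{L^2} \;=\; \delta_{\alpha\beta}\,\frac{\alpha!\,\pi^m}{(N+m)!},
\end{equation*}
so an orthonormal basis is $s^N_\alpha=\sqrt{(N+m)!/(\alpha!\,\pi^m)}\,Z^\alpha$. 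Combining this with the prescribed variance $\E|c_\alpha|^2=N!\pi^m/(N+m)!$ and applying the multinomial theorem collapses the covariance kernel to the clean identity
\begin{equation*}
C(z,w) \;=\; \sum_{|\alpha|=N}\frac{N!}{\alpha!}\,z^{\tilde\alpha}\bar w^{\tilde\alpha} \;=\; (1+z\cdot\bar w)^N,
\end{equation*}
which is the expected Szeg\H{o}/Bergman kernel for $\mathcal O(N)\to\C\Pa^m$ in this frame.

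With $C(z,w)=(1+z\cdot\bar w)^N$ in hand, each of the six covariances is obtained by a short termwise differentiation. Every derivative $\partial_{z_i}$ brings down a factor of $\bar w_i$ from $z\cdot\bar w$, and dually each $\partial_{\bar w_j}$ brings down a $z_j$; evaluating at $z=w=0$ therefore kills any term with a surviving $z$ or $\bar w$. This immediately forces $\E[f(0)\overline{\partial_i f(0)}]$, $\E[f(0)\overline{\partial_i\partial_j f(0)}]$, and $\E[\partial_i f(0)\overline{\partial_j\partial_k f(0)}]$ to vanish, while the two surviving identities come from the unique term in which every derivative pairs a $z$-factor with a $\bar w$-factor and leaves the constant $1$ behind: one such pairing yields $\E[\partial_i f(0)\overline{\partial_j f(0)}]=N\delta_{ij}$, and the two possible pairings in the four-derivative case yield the symmetrization $\E[\partial_i\partial_j f(0)\overline{\partial_k\partial_l f(0)}]=N(N-1)(\delta_{ik}\delta_{jl}+\delta_{il}\delta_{jk})$.

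The only step with any real content is the first one — the passage from the abstract orthonormal basis to the explicit formula $C(z,w)=(1+z\cdot\bar w)^N$, which requires tracking the normalizations so that the factors of $\pi^m$ and $(N+m)!$ in $\E|c_\alpha|^2$ cancel exactly the normalizing constants in the $s^N_\alpha$. Once that cancellation is verified, the six covariance identities are automatic, so the ``main obstacle'' is purely a bookkeeping one rather than an analytic one.
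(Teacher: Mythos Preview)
Your proposal is correct and follows essentially the same route as the paper: both identify the covariance kernel in the local frame as $C(z,w)=(1+z\cdot\bar w)^N$ and then obtain the six identities by differentiating and evaluating at $z=w=0$. The only difference is that you spell out the derivation of $C(z,w)$ via the monomial orthonormal basis and the multinomial theorem, whereas the paper simply invokes the known formula for the normalized Bergman projection kernel $\frac{N!\pi^m}{(N+m)!}\Pi_{N,m}$.
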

\begin{proof} The Gaussian field defined in \rf{eq:Gauss} is uniquely determined by its covariance kernel
$$\E[ s(x) \otimes \overline{s(y)}] = \frac{N! \pi^m}{(N+m)!} \Pi_{N,m}(x,y).$$ 
Here $\Pi_{N,m}$ is the kernel of the projection from $L^2(\C \Pa^m, \mathcal{O}(N))$ into $H^0(\C \Pa^m, \mathcal{O}(N))$. Note that this kernel is independent of our choice of an orthonormal basis $s^N_i$ in (\ref{eq:Gauss}). In local coordinates, it can be explicitly written as
$$\frac{N! \pi^m}{(N+m)!} \Pi_{N,m}(x,y) = (1 + z \cdot \bar{w})^N e^N(z) \otimes \overline{e^N(w)},$$
where $z$ and $w$ are the (inhomogeneous) coordinates of $x$ and $y$. The covariances in the statement follows by straightforward computations.
\end{proof}

As immediate consequences of {\sc Lemma} \ref{covar}, we see that $\rho_{\nabla f(0)}(0) = 1/(N\pi)^m$ and that both the matrix $\partial^2_{ij}f(0)$ and $f(0)$ are independent of the event $\partial_kf(0$, hence also independent of $\nabla s(0) = 0$. From (\ref{formulak}) we have $\partial^2_{z_i,\overline{z_j}}K_N(0) = N \delta_{ij}$, hence from (\ref{covariant}) we have
$$\det \nabla^2 s(0)  = \det(YY^* - N^2|f(0)|^2 I_m),$$ 
where the matrix $Y =\left\{ \partial^2_{ij}f(0)\right\}$ and  $I_m$ is the $m \times m$ identity matrix. Obviously the value of the determinant depends only on the eigenvalues of $YY^*$.  Therefore we need to study the distribution of the eigenvalues of $YY^*$, which is a $\Sym(m, \C)$-valued random matrix.

\begin{proposition} The law of the eigenvalues of $W = YY^*/mN(N-1)$ is identical with the law of the eigenvalues under the Wishart ensemble.
\end{proposition}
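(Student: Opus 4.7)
My plan is to identify $Y$ as a complex symmetric Gaussian matrix, then compute the joint eigenvalue density of $YY^*$ via the Autonne--Takagi singular value decomposition and match the result to the Wishart density from Theorem \ref{wishart}.

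First, I would read off the entrywise law of $Y = \{\partial_i\partial_j f(0)\}$ from Lemma \ref{covar}. Since mixed partial derivatives commute, $Y_{ij} = Y_{ji}$, so $Y \in \Sym(m,\C)$. The covariance identity
$$\E[Y_{ij}\overline{Y_{kl}}] = N(N-1)(\delta_{il}\delta_{jk} + \delta_{ik}\delta_{jl}),$$
together with the vanishing of $\E[Y_{ij}Y_{kl}]$ (which holds because $f$ is a circularly symmetric complex Gaussian field, so all unconjugated second moments of its derivatives vanish), shows that the diagonal entries $\{Y_{ii}\}$ are independent circularly symmetric complex Gaussians of variance $2N(N-1)$, the strict upper-triangular entries $\{Y_{ij}\}_{i<j}$ are independent circularly symmetric complex Gaussians of variance $N(N-1)$, and the two families are mutually independent.

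Next, set $\tilde Y = Y/\sqrt{mN(N-1)}$, so that $W = \tilde Y \tilde Y^*$. With respect to Lebesgue measure on the $m(m+1)/2$ independent complex entries, the preceding description makes the law of $\tilde Y$ on $\Sym(m,\C)$ proportional to $\exp\bigl(-\tfrac m2 \text{tr}(\tilde Y \tilde Y^*)\bigr)$. I would then invoke the Autonne--Takagi factorization $\tilde Y = U\Sigma U^T$, with $U$ unitary (determined modulo an $O(m)$-phase) and $\Sigma = \text{diag}(\sigma_1, \ldots, \sigma_m)$, $\sigma_1 \geq \cdots \geq \sigma_m \geq 0$. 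Since $\tilde Y \tilde Y^* = U\Sigma^2 U^*$, the eigenvalues of $W$ are precisely $\sigma_i^2$. Computing the Jacobian of $\tilde Y \mapsto (U, \Sigma)$ produces a radial factor proportional to $\prod_{i<j}(\sigma_i^2 - \sigma_j^2)\cdot\prod_i \sigma_i$ times a Haar piece on the unitary factor. Integrating out $U$ and changing variables to $\lambda_i = \sigma_i^2$, the $\prod_i \sigma_i$ cancels against $\prod_i d\sigma_i = 2^{-m}\prod_i \sigma_i^{-1}\,d\lambda_i$, leaving the joint density
$$\frac{1}{Z}\,\Delta(\lambda)\,\exp\Bigl(-\tfrac{m}{2}\sum_i \lambda_i\Bigr)$$
on $\R^m_{\geq 0}$. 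This is exactly the Wishart density of Theorem \ref{wishart}, and matching against \eqref{selb} fixes the constant $Z = Z_W(m)$.

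The main technical obstacle is the Jacobian computation for the Takagi decomposition. I would carry it out by parametrizing tangent vectors to $\Sym(m,\C)$ at a diagonal $\Sigma$, splitting complex-symmetric perturbations into diagonal, off-diagonal real, and off-diagonal imaginary pieces, and identifying which directions correspond to infinitesimal motion along the $U$-orbit versus along $\Sigma$. The key feature is that the phase stabilizer is $O(m)$ rather than a full torus, which yields the exponent $\beta = 1$ Vandermonde $\prod_{i<j}(\sigma_i^2 - \sigma_j^2)$ (a single power rather than a square), precisely what is needed to reproduce the real Wishart law; alternatively one may cite the classical singular-value formula for complex symmetric Gaussian ensembles.
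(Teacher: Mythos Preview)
Your proposal is correct and follows essentially the same route as the paper: identify the Gaussian density of $Y$ on $\Sym(m,\C)$ from Lemma~\ref{covar}, apply the Takagi factorization $Y = U\,\mathrm{diag}(\sqrt{\lambda})\,U^T$, compute the Jacobian to obtain the first-power Vandermonde $\Delta(\lambda)$, and match the resulting eigenvalue density to Theorem~\ref{wishart}. One small slip: the stabilizer of a generic $\Sigma$ under $U\mapsto U\Sigma U^T$ is the discrete group $\{\pm 1\}^m\cong O(1)^m$, not $O(m)$, but your conclusion that this produces the $\beta=1$ Vandermonde is correct.
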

\begin{proof} The natural Lebesgue measure on $\Sym(m,\C)$ as a real vector space is 
$$dH = \prod_{i \leq j} \textnormal{Re } dH_{ij} \textnormal{ Im } dH_{ij}.$$ 
From the last covariance identification in {\sc Lemma} \ref{covar} the density function of $Y$ with respect to the Lebesgue measure $dH$ is 
\begin{equation}
\frac{1}{2^m(N(N-1)\pi)^{\frac{m(m+1)}{2}}} e^{-\frac{1}{2N(N-1)}\text{Tr}(HH^*)}.\label{ydensity}
\end{equation}
Define the map $\varPhi : U(m) \times \R^m_{\geq 0} \rightarrow \text{Sym}(m,\C)$ by
$$\varPhi(U, \lambda) = U\text{diag}(\sqrt{\lambda}) U^T,$$ 
where $\text{diag}(\sqrt{\lambda})$ the diagonal matrix whose entries are $\sqrt{\lambda_1}, ..., \sqrt{\lambda_m}$ and $U^T$ is the transpose of $U$. ByTakagi's factorization (see {\sc Corollary} 4.4.4 of Horn and Johnson~\cite{H}), almost every $X \in \Sym (m, \C)$  can be written uniquely as $X = U \text{diag}(\sqrt{\lambda(XX^*)} U^T$,
where $U$ is a unitary matrix and $\lambda_i(XX^*)$ are the eigenvalues of $XX^*$ in decreasing order. A well known computation shows that the image of the Lebesgue measure $dH$ under $\varPhi$ becomes $\varPhi_*(dH) = \Delta(\lambda)\, d\lambda\,dU$, where $dU$ is the properly normalized Haar measure on $U(m)$.  Note that the Jacobian in this case is $\Delta(\lambda)$, a function of $\lambda$ alone. On the other hand, the exponent in (\ref{ydensity}) is 
$$\frac1{N(N-1)}\text{Tr}(YY*) = \frac1{N(N-1)}\sum_{i=1}^m\lambda_i(YY^*) = m\sum_{i=1}^n\lambda_i(W).$$
By passing from $\Sym(m, \C)$ to $U(m)\times\R^m_{\ge0}$, we see from (\ref{ydensity}) that the density functions for the distribution of the eigenvalues of $W = YY^*/N(N-1)$ must be a constant multiple of
$\Delta(\lambda)\exp\left[ -\frac m2\sum_{i=1}^m\lambda_i\right]$. Comparing this with the density function of the eigenvalues under the Wishart ensemble in {\sc Lemma} \ref{wishart} we obtain the result immediately. 
\end{proof}

Summarizing what we have proved so far, from the Kac-Rice formula in {\sc Proposition} \ref{kacrice} we conclude that $\E\,\mathcal{N}_{m,2m-k,N}(B)$ equals
\begin{equation*}\label{eq:mess}
	\frac{\textnormal{Vol}(\C \Pa^m) m^m (N-1)^m}{\pi^m}\, \E \left[ \mathbbm{1}_{(m+1)B}(|f(0)|^2) \mathbbm{1}_{[\lambda_{k+1}, \lambda_k]}\left(\frac{N|f(0)|^2}{(N-1)m}\right)\prod_{i=1}^m \left|\lambda_i- \frac{N|f(0)|^2}{(N-1)m}\right| \right],
	\end{equation*}
where $\lambda_i =\lambda_i(W)$ with $W$ obeying the Wishart ensemble and $f(0)$ is, according to {\sc Lemma} \ref{covar}, a standard complex Gaussian random variable independent of $W$. 
It remains to identify this with (\ref{complexindex}).  For this purpose, we note that $\frac{N |f(0)|^2}{(N-1)m}$ is exponentially distributed with mean $ \frac{N}{(N-1)m}$. Thus the expectation is 
\begin{equation}\label{lintegral}
\frac{m(N-1)}{NZ_W(m)} \int_{\frac{N(m+1)}{m(N-1)}B}  \int  \prod_{i=1}^m |\lambda_i - x| \Delta(\lambda) e^{-\frac{m}{2}(1-\frac{2}{N}) x }e^{-\frac{m}{2}(\sum_{i=1}^m \lambda_i + x)} d \lambda dx,
\end{equation}
where the inner integral with respect to $\lambda$ is over the set 
$$\{\lambda_1 > ... > \lambda_k > x > \lambda_{k+1} > ... > \lambda_m > 0\}.$$
This domain suggests we treat $x$ as if it is another $\lambda$. More precisely, introduce the new variables $\mu_i = \lambda_i$ for $1\le i\le k$, $\mu_{k+1} = x$, and $\mu_i = \lambda_{-1}$ for $k+2\le i\le m$. For the Vandermonde polynomial we have $\Delta(\mu) = \Delta(\lambda) \prod_{i=1}^m | \lambda_i - x|$.  In terms of the new variables $\mu$, the integral (\ref{lintegral}) becomes
$$\frac{m(N-1)}{NZ_W(m)}\int_{\R^{m+1}_{\geq 0}}  \mathbbm{1}_{\frac{(N-1)(m+1)}{mN}B} (\mu_{k+1}) e^{- (1 - \frac{2}{N}) \frac{m}{2} \mu_{k+1}} \exp \left[- \frac{m}{2}\sum^{m+1} _{i=1} \mu_i \right] \Delta(\mu) d \mu.$$
Comparing this with {\sc Lemma} \ref{wishart}, this is exactly the expectation with respect to $\Pa_{m+1}$ up to a constant. We will omit the identification of the constant stated in the theorem, it being a straightfoward computation using Selberg's integral formula for $Z_W(m)$. This completes the proof {\sc Theorem} \ref{aux}, our main result of this section. 

An immediate consequence of {\sc Theorem} \ref{aux} is that $\E\, \mathcal{N}_{m,q+1,N} (\R_{+})$ is decreasing in $q$ in the range $m \leq q < 2m$,
agreeing with {\sc Theorem} 1.4 of Baugher~\cite{BB}. Also, summing over $k$ in (\ref{complexindex}), we obtain the following corollary.

\begin{corollary}\label{totaux}
\begin{equation*}
\E\,\mathcal{N}_{m,N}(B) = \frac{2(m+1)(N-1)^{m+1}}{N} \int_{\frac{N}{N-1}B} e^{-(1-\frac{2}{N})\frac{m+1}{2}x} p_{m+1}(x) dx.
\end{equation*}
\end{corollary}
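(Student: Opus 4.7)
The plan is to derive the corollary directly by summing the identity of Theorem \ref{aux} over the allowed Morse indices. Since the real Hessian of $\log \|s(z)\|_h^2$ at a nondegenerate critical point has index between $m$ and $2m$, we have the decomposition $\mathcal{N}_{m,N}(B) = \sum_{k=0}^{m} \mathcal{N}_{m,2m-k,N}(B)$. Summing \eqref{complexindex} over $k = 0, 1, \ldots, m$ and interchanging the finite sum with the Wishart expectation, the right-hand side becomes
\[
\frac{2(N-1)^{m+1}}{N}\,\E_{m+1}\left[\sum_{i=1}^{m+1} e^{-(1-\frac{2}{N})\frac{m+1}{2}\lambda_i}\,\mathbbm{1}_{\frac{N}{N-1}B}(\lambda_i)\right],
\]
since the ordered eigenvalues $\lambda_1 \geq \cdots \geq \lambda_{m+1}$ of the $(m+1)\times(m+1)$ Wishart matrix exhaust its spectrum. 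This step uses nothing beyond the identity already proved in Theorem \ref{aux}.

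The second step is to recognize the inner sum as a linear spectral statistic and rewrite it as an integral against the marginal one-eigenvalue density. Interpreting $p_{m+1}$ as the probability density of a single (unordered) eigenvalue under $\Pa_{m+1}$, the permutation symmetry of the joint eigenvalue distribution gives the standard identity
\[
\E_{m+1}\left[\sum_{i=1}^{m+1} g(\lambda_i)\right] = (m+1)\int_{\R_+} g(x)\, p_{m+1}(x)\,dx
\]
for every measurable test function $g$. Applying this with $g(x) = e^{-(1-\frac{2}{N})\frac{m+1}{2}x}\mathbbm{1}_{\frac{N}{N-1}B}(x)$ produces exactly the formula stated in the corollary.

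There is no real obstacle here; the argument is essentially bookkeeping. The one piece of care needed is the normalization convention for $p_{m+1}$, which must be the marginal probability density (integrating to one) rather than the mean spectral density (integrating to $m+1$); the factor $(m+1)$ in the stated corollary pins down this convention uniquely. No new random matrix input beyond Theorem \ref{aux} and the symmetry of the Wishart joint density is needed.
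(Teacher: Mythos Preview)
Your proof is correct and is exactly the paper's approach: the paper simply says ``summing over $k$ in \eqref{complexindex}, we obtain the following corollary,'' and you have written out that summation in full, including the identification of the resulting spectral sum with the integral against $p_{m+1}$ using the paper's own definition of $p_{m+1}$ as the density of the expected empirical measure. Your remark on the normalization convention is consistent with the paper's definition given immediately after the corollary.
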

Here $p_{m+1}$ is the density function of the expected empirical distribution of the eigenvalues of the Wishart ensemble; namely, 
for any bounded continuous function $f$,
$$\E_{m+1}\left[ \frac{1}{m+1}\sum_{i=1}^{m+1}f(\lambda_i)\right] = \int_{\R_{+}} f(x) p_{m+1}(x) dx$$

\section{Proof of the main results}

In this section we prove our main results stated in {\sc Section} 1. 

\subsection{Proof of Theorem \ref{mainthm}}

 {\sc Theorems} \ref{aux} and \ref{largedev} together with Varadhan's lemma (see {\sc Theorem} 4.3.1 of Dembo and Zeitouni~\cite{DZ}) imply the first part of {\sc Theorem} \ref{mainthm}. The second part of {\sc Theorem} \ref{mainthm} is a straightforward corollary of the following lemma.

\begin{lemma} \label{concentration} For any $\epsilon > 0 $, $\gamma \in (0,1)$ and $\frac{k(m)}{m} \rightarrow \gamma$ , there exists a constant $C=C(\epsilon)$ such that
$$\Pa_m( \lambda_{k(m)} \notin (s_{\gamma} -\epsilon, s_{\gamma} + \epsilon) ) \leq e^{-Cm^2},$$
where $s_{\gamma}$ is defined as in (\ref{cond}).
\end{lemma}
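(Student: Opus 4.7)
The plan is to derive the lemma from the large deviation principle for the empirical measure $L_m = \frac{1}{m}\sum_{i=1}^m \delta_{\lambda_i}$ recalled as part (a) in the proof of {\sc Theorem} \ref{largedev}: under $\Pa_m$, $L_m$ satisfies an LDP at speed $m^2$ with good rate function uniquely minimized (and vanishing) at $\mu_{MP}$. Because the order statistic $\lambda_{k(m)}$ is encoded by the counting function $L_m[\,\cdot\,,\infty)$, any exponential concentration of $L_m$ around $\mu_{MP}$ in the weak topology automatically forces $\lambda_{k(m)}$ to concentrate near the quantile $s_\gamma$.

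First I would note that $s_\gamma$ is the unique point in $(0,4)$ with $\mu_{MP}[s_\gamma,4]=\gamma$, thanks to the strict positivity of $f_{MP}$ on $(0,4)$. I then split the two-sided event and treat the upper and lower tails separately.

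For the upper tail, pick a bounded continuous $\phi$ sandwiched as $\mathbbm{1}_{[s_\gamma+\epsilon,\infty)}\le \phi\le \mathbbm{1}_{[s_\gamma+\epsilon/2,\infty)}$, so that $\int\phi\,d\mu_{MP}\le \mu_{MP}[s_\gamma+\epsilon/2,4]=\gamma-\delta$ for some $\delta=\delta(\epsilon)>0$. On the event $\{\lambda_{k(m)}\ge s_\gamma+\epsilon\}$, at least $k(m)$ eigenvalues lie in $[s_\gamma+\epsilon,\infty)$, so $\int\phi\,dL_m\ge k(m)/m$, which exceeds $\gamma-\delta/2$ for all sufficiently large $m$ because $k(m)/m\to\gamma$. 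Hence this event is contained in $\{L_m\in F^+\}$, where $F^+=\{\mu:\int\phi\,d\mu\ge\gamma-\delta/2\}$ is weakly closed (continuity and boundedness of $\phi$) and does not contain $\mu_{MP}$. Since the rate function is good, lower semicontinuous, and has $\mu_{MP}$ as its unique zero, we get $\inf_{F^+}I>0$, and the LDP gives $\Pa_m(L_m\in F^+)\le e^{-Cm^2}$ for some $C>0$ and $m$ large. A symmetric construction handles the lower tail: take $\psi$ continuous with $\mathbbm{1}_{[s_\gamma-\epsilon/2,\infty)}\le \psi\le \mathbbm{1}_{[s_\gamma-\epsilon,\infty)}$, so that $\int\psi\,d\mu_{MP}\ge\gamma+\delta'$ for some $\delta'>0$. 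On $\{\lambda_{k(m)}\le s_\gamma-\epsilon\}$ at most $k(m)-1$ eigenvalues exceed $s_\gamma-\epsilon$, so $\int\psi\,dL_m\le (k(m)-1)/m<\gamma+\delta'/2$ for large $m$. The complementary closed set $F^-=\{\mu:\int\psi\,d\mu\le\gamma+\delta'/2\}$ misses $\mu_{MP}$, and the LDP again yields an $e^{-Cm^2}$ bound. Taking the smaller of the two exponents gives the constant $C(\epsilon)$ in the statement.

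The only mildly delicate point is making sure the subsets of $\mathscr{P}(\R_+)$ used to control the tails of $\lambda_{k(m)}$ are genuinely weakly closed; using bounded continuous test functions $\phi,\psi$ rather than the indicators themselves sidesteps the issue that $\mu\mapsto\mu[a,\infty)$ is only upper semicontinuous. Beyond this there is no serious obstacle: the content is entirely in applying the empirical measure LDP and in tracking the hypothesis $k(m)/m\to\gamma$ carefully enough to absorb the $o(1)$ error into the slack $\delta/2$.
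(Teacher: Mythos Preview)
Your proof is correct and follows essentially the same route as the paper: both derive the bound from the LDP for the empirical measure $L_m$ at speed $m^2$, translate the tail events $\{\lambda_{k(m)}\gtrless s_\gamma\pm\epsilon\}$ into events of the form $\{L_m\in F\}$ for a closed set $F$ not containing $\mu_{MP}$, and invoke goodness of the rate function. The only difference is that you explicitly smooth the indicators by bounded continuous test functions to guarantee weak closedness of $F^\pm$, whereas the paper leaves this technicality implicit.
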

\begin{proof}
This is an immediate consequence of the large deviation principle for $L_m = \frac1m\sum_{i=1}^m \delta_{\lambda_i}$ with respect to $\Pa_m$ whose rate function is minimized at the Marchenko-Pastur distribution $\mu_{MP}$ (see {\sc Theorem} 5.5.7 of Hiai and Petz~\cite{HP}). To see this, we use the fact that 
$$\Pa_m( \lambda_{k(m)} > s_{\gamma} + \epsilon) = \Pa_m \left(L_m(s_{\gamma} + \epsilon,\infty) \geq \frac{k(m)}{m} \right).$$
Since   $\mu_{MP}(s_{\gamma}+\epsilon, \infty) <  \mu_{MP}(s_{\gamma}, \infty) = \gamma$, there must exist a positive constant $C$ such that for large $m$ $$\Pa_m \left(L_m(s_{\gamma} + \epsilon,\infty) \geq \frac{k(m)}{m} \right) \leq  \exp(-Cm^2 )$$An analogous argument can be made for $\Pa_m(\lambda_{k(m)} < s_{\gamma} - \epsilon))$ which we leave to the reader.
\end{proof}

\subsection{Proof of Theorem \ref{distofsadd}} 
{\sc Theorem} \ref{distofsadd} is equivalent to the statement that for any Borel set $B$, $$\E \, \mathcal{N}_{m,m,N}(B) = \int_{B} (N-1)^m(m+1)^2 e^{-\frac{(m+1)N}{2(N-1)}(2 - \frac{2}{N}+m)x} dx$$	The crux of the proof lies in the following

\begin{lemma}\label{eq:smalleigdist}
The distribution of the smallest eigenvalue $\lambda_m$ of the Wishart ensemble given by
$$\Pa_m \left( \frac{m}{2} \lambda_m \geq x \right) = e^{-mx}.$$
\end{lemma}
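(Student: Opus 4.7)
\textbf{Proof plan for Lemma~\ref{eq:smalleigdist}.}

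My plan is to compute the marginal density of $\lambda_m$ directly from the joint density in Theorem~\ref{wishart} by translating the other eigenvalues to the origin. The joint density of $(\lambda_1,\ldots,\lambda_m)$ on the ordered region $\lambda_1\geq \cdots\geq \lambda_m\geq 0$ is proportional to $\Delta(\lambda)\exp[-\tfrac{m}{2}\sum_i \lambda_i]$, so to get the law of $\lambda_m$ I would integrate out $\lambda_1,\ldots,\lambda_{m-1}$ under the ordering constraints $\lambda_{m-1}\geq \lambda_m$.

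The key step is the substitution $\mu_i=\lambda_i-\lambda_m$ for $i=1,\ldots,m-1$. The ordering region transforms to $\mu_1\geq \mu_2\geq \cdots\geq \mu_{m-1}\geq 0$, which no longer involves $\lambda_m$. The Vandermonde factorizes cleanly: differences among $\lambda_1,\ldots,\lambda_{m-1}$ equal the corresponding differences among $\mu_1,\ldots,\mu_{m-1}$, while $\lambda_i-\lambda_m=\mu_i$ for $i<m$, giving
\begin{equation*}
\Delta(\lambda_1,\ldots,\lambda_m)=\Delta(\mu_1,\ldots,\mu_{m-1})\prod_{i=1}^{m-1}\mu_i.
\end{equation*}
The exponent also separates, since $\sum_{i=1}^m \lambda_i=\sum_{i=1}^{m-1}\mu_i+m\lambda_m$, yielding a factor $e^{-\frac{m^2}{2}\lambda_m}$ times a function of $\mu$ alone. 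After integrating out $\mu$, the marginal density of $\lambda_m$ is therefore a constant multiple of $e^{-\frac{m^2}{2}\lambda_m}\indi_{\lambda_m\ge 0}$. Matching the total mass to one forces the rate to be $m^2/2$, so $\lambda_m$ is $\mathrm{Exp}(m^2/2)$ and $\Pa_m(\tfrac{m}{2}\lambda_m\geq x)=e^{-mx}$, as claimed.

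There is really no hard step here; the only point to be careful about is verifying that the Vandermonde factorization is exactly $\prod_{i<m}\mu_i\cdot\Delta(\mu)$ and that the ordering region for $\mu$ decouples from $\lambda_m$. Both are immediate from the translation invariance of differences. Since the normalization is forced by probability, I would not bother evaluating the $\mu$-integral explicitly, which avoids re-deriving Selberg-type formulas. An alternative plan would be to combine the identity $\E_m[e^{-t\lambda_m}]=$ the $(m-1)$-dimensional $\mu$-integral with $Z_W(m)/Z_W(m-1)$ using \eqref{selb}, but the direct density computation above seems cleaner.
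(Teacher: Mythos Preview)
Your proof is correct and uses essentially the same idea as the paper: both exploit the translation invariance of the Vandermonde determinant together with the linearity of the exponent. The only cosmetic difference is that the paper shifts all $m$ eigenvalues by the fixed amount $2x/m$ to compute the tail $\Pa_m(\lambda_m\ge 2x/m)$ directly, whereas you shift the $m-1$ larger eigenvalues by $\lambda_m$ to extract the marginal density first; either way the answer drops out after a normalization argument.
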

\begin{proof} This is {\sc Theorem} 4.2 of Edelman~\cite{EA} but for we provide a short proof here. We have
$$\Pa_m \left( \frac{m}{2} \lambda_m  \geq x \right) = \frac{1}{m! Z_W(m)}\int_{\frac{2x}{m}}^{\infty} ... \int_{\frac{2x}{m}}^{\infty} \Delta(\lambda) \exp \left( -\frac{m}{2} \sum_{i=1}^m \lambda_i \right) d \lambda.$$
Making a change of variable $\mu = \lambda - \frac{2x}{m}$ we see that the probability must be of the form of a constant times $e^{-mx}$, hence the result.. 
\end{proof}

Returning to the proof of {\sc Theorem} \ref{distofsadd}, we recall from (\ref{complexindex}) that
$$\E \, \mathcal{N}_{m,m,N}(B) = \frac{2(N-1)^{m+1}}{N} \E_{m+1} \left[e^{-(1 -\frac{2}{N})\frac{m+1}{2} \lambda_{m+1}}  ; \lambda_{m+1} \in \frac{N}{N-1}B \right].$$
{\sc Lemma} \ref{eq:smalleigdist} allows us to write 
\begin{align*}\E_{m+1}\left[ e^{-(1- \frac{2}{N})\frac{m+1}{2}\lambda_{m+1}} ;\lambda_{m+1} \in \frac{N}{N-1}B \right] &= \int_{\frac{(m+1)N}{2(N-1)}B} (m+1) e^{-(1-\frac{2}{N})x} e^{-(m+1)x} dx\\
			&= \int_{B}\frac{(m+1)^2N}{2(N-1)} e^{-\frac{(m+1)N}{2(N-1)}(2 - \frac{2}{N}+m)u} du, 
\end{align*}
where the second equality follows from the change of variables $u = \frac{2(N-1)}{(m+1)N}x$. Since this is true for any Borel set $B$, we obtain the desired result.
	
\subsection{Proof of Theorem \ref{totalcrit}}
To simplify the notation, we introduce
$$\psi(t) = \log(N-1) - (1-\frac{2}{N})\frac{t}{2}.$$
We first consider the case $x_N \geq 4$.  We have the following inequalities: 
$$\frac{2}{N}\E_{m+1}[e^{(m+1)\psi(\lambda_{1})} ; \lambda_1 \geq x_N] \leq \frac{2(m+1)}{N}\int_{x_N}^{\infty} e^{(m+1)\psi(t)} p_{m+1}(t) dt \leq \frac{2(m+1)}{N} e^{(m+1) \psi(x_N)} \Pa_{m+1}(\lambda_1 \geq x_N).$$
By {\sc Corollary} $\ref{totaux}$, the middle expression is $ \E \, \mathcal{N}_{m,N}(x,\infty)$. For the right hand side, {\sc Theorem} \ref{largedev} yields 
\begin{equation*}
\lim_{m \rightarrow \infty}\frac{1}{m} \log \left[\frac{2(m+1)}{N} e^{(m+1)\psi(x_N)} \Pa_{m+1}(\lambda_1 \geq x_N)\right]= \psi(x_N) + I_{MP}(x_N).
\end{equation*}
For the left hand side, we apply Varadhan's lemma ({\sc Theorem} 4.3.1 of Dembo and Zeitouni~\cite{DZ}) in conjunction with {\sc Theorem \ref{largedev}} to obtain 
$$\lim_{m \rightarrow \infty}\frac{1}{m} \log \left[ \frac{2}{N}\E_{m+1}[e^{(m+1)\psi(\lambda_{1})} ; \lambda_1 \geq x_N] \right] = \psi(x_N) + I_{MP}(x_N) .$$
The use of Varadhan's lemma is justified because 
$\psi$ is bounded from above
and thus the tail condition in {\sc Theorem} 4.3.1 of Dembo and Zeitouni~\cite{DZ}) is satisfied.

We now consider the case $x_N < 4$. 
We can use the same inequality we used in the case $x_N \geq 4$ for the upper bound. 
Unfortunately, the lower bound given by this inequality is not sharp enough. To remedy this defect, we use a different inequality
$$\frac{2}{N}e^{(m+1)\psi(x_N+\epsilon)} \Pa_{m+1}( L_{m+1}[x_N, \infty) > 0) \leq \frac{2(m+1)}{N}\int_{x_N}^{\infty} e^{(m+1) \psi(t)} p_{m+1}(t) dt,$$
which holds for any positive $\epsilon$. 
The LDP on $L_m$ guarantees that $\Pa_m(L_m[x_N, \infty) > 0 ) \rightarrow 1$ since the rate function for this LDP is minimized at the Marchenko-Pastur distribution on [0,4], which assigns positive measure to $[x_N,\infty)$. Hence, $$\lim_{m \rightarrow \infty} \frac{1}{m} \log \left( \frac{2}{N}e^{(m+1) \psi(x_N + \epsilon)} \Pa_{m+1}(L_{m+1}[x_N,\infty)>0) \right) = \psi(x_N + \epsilon).$$ Since $\epsilon$ is arbitrary and $\psi$ is continuous, we are done.

\section{Spherical harmonics}
The case of spherical harmonics of degree $N$ on the sphere $S^m$ is similar to the case of holomorphic 
sections of the line bundle $\mathcal{O}(N)$ over $\C \Pa^m$. To be precise, we define the spherical harmonics of degree $N$ by considering the space $H_{N}(S^m)$ of homogenous harmonic polynomials on $\R^{m+1}$ and viewing the functions in $H_N(S^m)$ as functions on the sphere $S^m$ by restriction. We view $H_N(S^m)$ as a Hilbert space equipped with the $L^2(S^m)$ inner product and choose an orthonormal basis $\varphi_{i,N}$. The Gaussian field of random spherical harmonics of degree $N$ is 
 \begin{equation} \label{varphi}\varphi = \sum_{i} c_i \varphi_{i,N}
 \end{equation}
where the $c_i$ are i.i.d. mean zero Gaussians with the normalized variance
$$\E |c|^2 = \frac{\text{Vol}(S^m)}{ \dim H_N(S^m)}.$$
With this choice of normalization, we have $$\E[\varphi(x) \varphi(y)] = \frac{\text{Vol}(S^m)}{ \dim H_N(S^m)} P_{N,m}(x,y) := \nu_{N,m}( \langle x, y \rangle) $$
where $P_{N,m}$ is the projection kernel from $L^2(S^m) \rightarrow H_N(S^m)$, as in the complex case, and $\nu_{N,m}$ is a real-valued function. The key property of this covariance kernel is that it only depends on the inner product $\langle x,y \rangle$ and hence it is invariant under the usual $SO(m+1)$ action on $S^m$, similar to the $SU(m+1)$ invariance of the covariance kernel in (\ref{eq:Gauss}).
 
 For a set $B \subset \R$, we define $\mathcal{N}_{m,k,N}(B)(\varphi)$ to be the number of critical points of $\varphi$ of Morse index $k$ with values in $\sqrt{m+1}B$. Symbolically, 
$$\mathcal{N}_{m,k,N}(B) (\varphi)= \sum_{\sigma : \nabla \phi(\sigma) = 0, \textnormal{Ind}( \nabla^2 \varphi(\sigma)) = k } \mathbbm{1}_{\sqrt{m+1}B}(\varphi(\sigma)),$$
where $\nabla$ and $\nabla^2$ denote the standard gradient and Hessian in the ambient space $\R^{m+1}$ restricted to the sphere $S^m$ and $\textnormal{Ind}(\nabla^2 \varphi(\sigma))$ is the number of negative eigenvalues of $\nabla^2 \varphi (\sigma)$. As before, we can view this as an integer-valued random variable if we sample $\varphi$ according to the Gaussian field (\ref{varphi}). The number of critical points is 
$\mathcal{N}_{m,N}(B) = \sum_k \mathcal{N}_{m,k,N}(B)$.  We have the following results analogous to those stated in our {\sc Theorem} \ref{mainthm}.

\begin{theorem}For a fixed integer $k$, we have:
\begin{equation*}
\lim_{m \rightarrow \infty }\frac{1}{m} \log \E\,  \mathcal{N}_{m,k,N}(\R) = \frac{1}{2} \log(N-1) - (1 - \frac{2}{N})
\end{equation*}
If we make no restrictions on the Morse index, we have:
\begin{equation*}
\lim_{m \rightarrow \infty }\frac{1}{m} \log \E\, \mathcal{N}_{m,N}(\R) = \frac{1}{2} \log (N-1)
\end{equation*}
\end{theorem}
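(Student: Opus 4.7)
The plan is to mirror the argument for holomorphic sections over $\C \Pa^m$. Three analogous ingredients are required: (i) a Kac--Rice formula for critical points of $\varphi$ on $S^m$; (ii) reduction via $SO(m+1)$-invariance to a fixed reference point, say the north pole $o = (0,\dots,0,1)$; and (iii) identification of the conditional law of $\nabla^2 \varphi(o)$ given $\nabla\varphi(o) = 0$ with a (shifted) matrix from the Gaussian orthogonal ensemble (GOE), playing here the role that the Wishart ensemble played in Section 4.

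First I would set up a normal coordinate chart at $o$ and compute the joint Gaussian law of $\varphi(o)$, $\nabla\varphi(o)$, and $\nabla^2 \varphi(o)$ directly from the covariance $\nu_{N,m}(\langle x,y\rangle)$. Using the spherical Laplacian identity $\Delta_{S^m}\varphi = -N(N+m-1)\varphi$, which pins $\mathrm{tr}(\nabla^2\varphi(o))$ to a scalar multiple of $\varphi(o)$, one finds that $\nabla\varphi(o)$ is independent of the pair $(\varphi(o),\nabla^2\varphi(o))$ and, after an $m$- and $N$-dependent rescaling, that
$$
\nabla^2\varphi(o) \;+\; c_{m,N}\,\varphi(o)\, I_m
$$
is a GOE matrix. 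This is the real symmetric analog of the factorization $\nabla^2 s(0) = YY^* - N^2|f(0)|^2 I_m$ used in Section 4.

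With this identification, setting $B=\R$ reduces the Kac--Rice formula to
$$
\E\,\mathcal{N}_{m,k,N}(\R) \;=\; C_{m,N}\int_{\R}\E_{\mathrm{GOE}_m}\!\bigl[|\det(G_m - tI_m)|\,\indi_{\{\mathrm{Ind}(G_m - tI_m)=k\}}\bigr]\,e^{-t^2/2}\,dt,
$$
where the $t$-integration corresponds to integrating out the Gaussian $\varphi(o)$, $G_m$ is a normalized GOE matrix, and the prefactor $C_{m,N}$ is explicit from $\mathrm{Vol}(S^m)$ and $\dim H_N(S^m)$ and contributes $\tfrac12\log(N-1)$ at exponential scale (the real counterpart of $\log(N-1)$ in Theorem \ref{mainthm}). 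For the total count, summing over $k$ removes the index indicator, and the Ben Arous--Guionnet LDP for the GOE spectral measure gives $\tfrac1m\log\E_{\mathrm{GOE}_m}[|\det(G_m - tI_m)|]\to \int\log|\lambda - t|\,\sigma_{sc}(d\lambda)$ with $\sigma_{sc}$ the semicircle law; a Laplace-type analysis in $t$ then locates the maximum of the integrand at $t=0$ by the symmetry of $\sigma_{sc}$, producing the rate $\tfrac12\log(N-1)$.

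For the fixed-$k$ case, the index constraint forces all but $k$ of the eigenvalues of $G_m - tI_m$ to lie on one side of the origin, which at the level of large deviations pushes $t$ to the right edge of the semicircle ($t = 2$), in exact analogy with the role played by the LDP for $\lambda_{k+1}$ in Theorem \ref{largedev}. Evaluating $-t^2/2$ at this shifted location, together with the exact value of the normalizing $c_{m,N}$, yields the advertised penalty $-(1-2/N)$. The main obstacle I anticipate is step (iii): carrying out the covariance calculation precisely enough to read off the correct normalization $c_{m,N}$, since this scaling is what fixes both the $\tfrac12$ prefactor in $\tfrac12\log(N-1)$ and the exact value of the $(1-2/N)$ penalty. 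Once the reduction to shifted GOE is accomplished, the asymptotics follow from Varadhan's lemma applied with the GOE LDPs, in direct parallel with Section 5.
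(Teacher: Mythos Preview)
Your proposal is correct and follows essentially the same route the paper indicates: the paper does not give a detailed proof but defers to Auffinger and Ben Arous~\cite{AB}, whose Kac--Rice $\to$ shifted GOE $\to$ LDP/Varadhan scheme is precisely what you outline. You also correctly anticipate the one issue the paper singles out---that the covariance $\nu_{N,m}$ depends on $m$, so the normalizing constants $c_{m,N}$ must be tracked carefully to extract the $\tfrac12\log(N-1)$ and the $(1-2/N)$ penalty.
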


These results take the form as those in {\sc Theorem} \ref{mainthm} except for the factor 1/2. Most of the computations required for the proof of this theorem can be found in Auffinger and Ben Arous~\cite{AB} with necessary changes. One of the differences needing to be taken care of is that our covariance kernel is not given by a single positive-definite function independent of the dimension $m$.


\begin{thebibliography}{1}

\bibitem{GZ} Anderson, G. Guionnet, A., and O. Zeitouni.  {\it An introduction to random matrices}. (2011). Cambridge University Press, 2nd edition, Cambridge.

\bibitem{A} Anderson, T.W.  {\it An Introduction to Multivariate Statistical Analysis}.(1984). Ed. 2, John Wiley \& Sons, Inc., 

\bibitem{AB} Auffinger, A., Ben Arous, G. {\it Complexity of random smooth functions on the high-dimensional sphere}, Ann. Prob. 41(2013), 4214-4247.

\bibitem{ABC} Auffinger, A., Ben Arous, G. and \v{C}ern\'y, J.  {\it Random matrices and complexity of spin glasses}. (2013). {Comm. on Pure and Appl. Math. \bf 66}   165--201.

\bibitem{BB}Baugher, B.  {\it Asymptotics and dimensional dependence of the number of critical points of random holomorphic section}. (2008).  Comm. Math. Phys. , Volume 282, Issue 2, pp 419–433.

\bibitem{DZ}  Dembo, A. and Zeitouni, O. { \it Large deviations techniques and applications } (1998), second ed., Applications of Mathematics, vol. 38, Springer-Verlag, New York. MR MR1619036 (99d:60030)

\bibitem{DSZ1} Douglas, M., Shiffman, B., and S. Zelditch, {\it Critical points and supersymmetric vacua I }. (2004). Comm. Math. Phys. 252 , no. 1-3, 325–358 (arxiv.org/math.CV/0402326).

\bibitem{DSZ}  Douglas, M., Shiffman, B., and Zelditch, S.  {\it Critical points and supersymmetric vacua. II Asymptotics and extremal metrics}. (2006). J. Diff. Geom. 72  381.

\bibitem{EA} Edelman, A.  {\it Eigenvalues and Condition numbers of random matrices}. (1989).  SIAM J Matrix Anal. Appl. 9 543-560.

\bibitem{DF} Feral, D. {\it On large deviations for the spectral measure of discrete Coulomb gas } (2008). In Seminaire de Probabilites XLI, volume 1934 of Lecture Notes in Math., pages 19–49. Springer, Berlin.

\bibitem{HP}Hiai, F., Petz, D. {\it The semicircle law, free random Variables and entropy} Mathematical Surveys and monographs 77, AMS, (2000).

\bibitem{H} Horn, R., Johnson, C. { \it Matrix Analysis } (1985) Cambridge, UK. Cambridge University Press.




\end{thebibliography}
\end{document}